\documentclass[sn-mathphys,Numbered]{sn-jnl} 
\usepackage{relsize}
\usepackage{lscape}
\usepackage{pdflscape}
\usepackage{mathtools}
\usepackage{cite}
\usepackage{anyfontsize}
\usepackage{graphicx} 
\usepackage{multirow} 
\usepackage{amsmath,amssymb,amsfonts} 
\usepackage{amsthm} 
\usepackage{mathrsfs} 
\usepackage[title]{appendix} 
\usepackage{xcolor} 
\usepackage{float}
\usepackage{textcomp} 
\usepackage{manyfoot} 
\usepackage{booktabs} 
\usepackage{algorithm} 
\usepackage{algorithmicx} 
\usepackage{algpseudocode} 
\usepackage{listings}
\usepackage{array}
\usepackage{caption}
\usepackage{subcaption}

\usepackage[utf8]{inputenc}
\usepackage{booktabs}
\usepackage{amsfonts}
\usepackage{siunitx}
\usepackage{longtable}
\usepackage{eucal}
\usepackage{multirow}
\usepackage{enumerate}
\usepackage{lineno}
\usepackage{framed}
\usepackage{placeins}
\usepackage{cleveref} 
\newtheorem{theorem}{Theorem}[section]
 
\newtheorem{lemma}{Lemma}[section]

\newtheorem{remark}{Remark}[section] 
\newtheorem{assumption}{Assumption}[section] 
\newtheorem{definition}{Definition}[section]
\newtheorem{property}{Property}

\DeclareMathOperator{\argmin}{\textnormal{argmin}}
\begin{document}

\title[PRP, HS and LS Conjugate Gradient Methods for Interval-Valued MOPs]{PRP, HS and LS  Conjugate Gradient Methods for Interval-Valued Multiobjective Optimization Problems}

\author[1]{\fnm{Tapas} \sur{Mondal}}\email{tapas.ra.mat24@itbhu.ac.in}
\author*[2]{\fnm{Debdulal} \sur{Ghosh}}\email{debdulag@srmist.edu.in, debdulal.email@gmail.com}
\author[3,4]{\fnm{Zai-Yun} \sur{Peng}}\email{pengzaiyun@126.com}
\author[5]{\fnm{Yong} \sur{Zhao}}\email{zhaoyongty@126.com}

\affil[1]{\orgdiv{Department of Mathematical Sciences}, \orgname{Indian Institute of Technology (BHU)}, \orgaddress{\city{Varanasi}, \postcode{221005}, \state{Uttar Pradesh}, \country{India}}}

\affil[2]{\orgdiv{Department of Mathematics}, \orgname{SRM Institute of Science and Technology}, \orgaddress{\city{Kattankulathur}, \postcode{603203}, \state{Tamil Nadu}, \country{India}}}

\affil[3]{\orgdiv{School of Mathematics}, \orgname{Yunnan Normal University}, \orgaddress{\city{Kunming}, \state{650092}, \country{China}}}

\affil[4]{\orgdiv{Yunnan Key Laboratory of Modern Analytical Mathematics and Applications}, \orgname{Yunnan Normal University}, \orgaddress{\city{Kunming}, \state{650092}, \country{China}}}

\affil[5]{\orgdiv{College of Mathematics and Statistics}, \orgname{Chongqing Jiaotong University}, \orgaddress{\city{Chongqing}, \postcode{400074}, \country{China}}}

\abstract{In this article, we develop an efficient algorithm based on three special variants of the nonlinear conjugate gradient method, namely, the Polak--Ribiere--Polyak, Hestenes--Stiefel, and Liu--Story schemes for computing Pareto critical points in unconstrained interval-valued multiobjective optimization problems. The proposed algorithm incorporates a Wolfe line search strategy to determine a suitable range of step size that satisfies the standard Wolfe conditions. For each of the proposed variants of the nonlinear conjugate gradient method, we establish rigorous global convergence results under appropriate assumptions. To demonstrate the effectiveness of the proposed methods, we conduct numerical experiments on a set of benchmark test problems and present a comprehensive performance profile analysis.}

\keywords{Nonlinear conjugate gradient method, Interval-valued multiobjective optimization, Wolfe line search}


\maketitle
\section{Introduction}\label{Introduction}
In real-world optimization tasks, it is common to deal with several objective functions at the same time, and these objectives often conflict with one another. Problems of this nature are called {\it multiobjective optimization problems} (MOPs). In contrast to single objective optimization, where a single best solution is typically obtained, MOPs lead to a collection of compromise solutions, known as Pareto optimal or nondominated solutions. Such problems arise naturally in various application areas, including engineering design, economics, and management science.

Standard formulations of MOPs generally assume that the objective functions are known precisely. However, in numerous real-life situations, this assumption may not hold due to incomplete, imprecise, or uncertain information. As a result, the objective functions may exhibit variability or uncertainty, motivating the study of MOPs under imprecise or uncertain environments. This has prompted to a growing research interest in developing efficient solution methodologies for such problems, particularly when the objective functions are represented using interval-valued models.

Among the various numerical techniques, iterative methods that utilize gradient information have proven to be highly effective for solving {\it interval-valued multiobjective optimization problems} (IVMOPs). In particular, {\it nonlinear conjugate gradient methods} (NCGMs) are widely recognized for their computational efficiency and strong global convergence behavior. In this work, we focus on the development of three specialized variants of the NCGM, namely the Polak–Ribiere–Polyak (PRP), Hestenes–Stiefel (HS), and Liu–Storey (LS) schemes. These variants are particularly appealing due to their superior computational performance compared to several classical approaches, including the Fletcher–Reeves (FR), Conjugate Descent (CD), Dai–Yuan (DY), and modified Dai–Yuan (mDY) methods.

Before proceeding further, we provide a brief overview of the existing literature on solution methodologies for MOPs.

\subsection{Literature Survey}
Over the past few decades, a variety of parameter-based techniques have been proposed for solving MOPs, including the weighted sum, $\epsilon$-constraint, and weighted Tchebycheff metric scalarization methods (see, e.g., \cite{ehrgott2005multicriteria, miettinen1999nonlinear, ghosh2014directed}). These approaches primarily rely on transforming a MOP into an equivalent single objective formulation through appropriate parameterization. Although the weighted sum method is straightforward to implement, it is well known that it may fail to capture certain Pareto optimal solutions when the associated objective functions are nonconvex. The $\epsilon$-constraint method, on the other hand, is capable of identifying Pareto optimal solutions even in nonconvex MOPs, however, its effectiveness depends critically on the careful selection of the $\epsilon$-vector within suitable bounds determined by the individual objective functions. Similarly, the weighted Tchebycheff method ensures the generation of Pareto optimal solutions but requires prior knowledge of the extreme values of the objective functions. A common limitation of these parameter-based methods is the requirement to appropriately choose parameter values, which often imposes a significant burden on the decision-maker. To address these challenges, several iterative algorithms have been developed that utilize gradient and Hessian information of the objective functions. The pioneering work was done by Fliege and Svaiter \cite{fliege2000steepest} in 2000, where an iterative scheme based on the steepest descent (SD) algorithm was introduced to compute Pareto critical points in unconstrained MOPs. They also extended their ideas to constrained MOPs using a feasible direction approach. Notably, both methods avoided reliance on scalarization strategies or preference-based ordering structures. Subsequently, Drummond and Iusem \cite{drummond2004projected} proposed a projected gradient algorithm tailored for vector optimization problems, and established its convergence behavior under both convex and nonconvex scenarios.

Later, Assun\c{c}\~{a}o et al. \cite{assuncao2021conditional} investigated the applicability of the Frank--Wolfe technique in constrained MOPs, assuming the feasible region to be convex and compact. Their analysis included both convergence guarantees and complexity estimates for the iterative process. Beyond first-order methods, attention has also been given to higher-order approaches. In this direction, Fliege et al. \cite{fliege2009newton} introduced the Newton method in 2009, demonstrating locally superlinear convergence under the assumptions of twice continuous differentiability and local strong convexity of the objective functions. They further showed that quadratic convergence can be achieved when additional Lipschitz continuity conditions are satisfied.

Despite its faster convergence characteristics, the Newton method may encounter difficulties when the Hessian matrices are not invertible. To overcome this drawback, Povalej \cite{polvaj2014quasi} proposed a quasi-Newton method that replaces exact Hessian with a suitable approximation, along with corresponding convergence analysis. More recently, Lapucci and Mansueto \cite{lapucci2023limited} introduced a limited memory variant of quasi-Newton methods for MOPs, incorporating a Wolfe line search to determine appropriate step sizes. Their results include global convergence as well as $R$-linear convergence to Pareto optimal solutions for strongly convex and twice continuously differentiable objective functions. Furthermore, Mohammadi and Cust\'{o}dio \cite{mohammadi2024trust} in 2024 presented a trust region method aimed at approximating the collection of Pareto critical points in MOPs.

In more recent developments, NCGMs have emerged as one of the most efficient classes of algorithms for MOPs due to their favorable computational cost and strong global convergence properties. A foundational contribution in this direction is due to Lucambio P{\'e}rez and Prudente \cite{perez2018nonlinear}, who systematically extended NCGMs to vector optimization. In their work, generalized versions of the standard Wolfe and strong Wolfe line search conditions were introduced for MOPs, and a corresponding Zoutendijk condition was established. Utilizing this framework, the authors proved global convergence of their algorithm without imposing restrictive assumptions on the algorithmic parameters. Furthermore, they established convergence results for several classical NCGMs, including FR, CD, DY, mDY, PRP, and HS methods.

Further investigations have focused extensively on analyzing and improving the computational performance of different conjugate gradient parameters for the MOPs. For instance, in \cite{Elboulqe2024explicit}, the authors proposed an algorithm that directly handles biobjective optimization problems without resorting to scalarization. Their approach constructs explicit descent directions satisfying a sufficient descent condition independent of line search and establishes global convergence to Pareto stationary points under an Armijo-type condition. In another study, Chen et al. \cite{chen2025PRP} developed a PRP variant of the NCGM for nonconvex vector optimization problems by introducing a nonnegative conjugate gradient parameter, avoiding the conventional truncation strategy. Under suitable descent conditions, they proved global convergence using both standard Wolfe and Armijo-like line search techniques. Moreover, in \cite{Hu2025modified}, a modified PRP variant of the NCGM was proposed, demonstrating improved computational efficiency compared to the classical PRP scheme.

More recently, attention has also been directed toward the LS variant of NCGMs for vector optimization problems. In 2022, Gon\c{c}alves et al. \cite{gonclaves2022study} introduced the LS variant and established global convergence results under sufficient descent conditions and the standard Wolfe line search. They further proposed a modified LS parameter and proved convergence under the strong Wolfe condition, as well as under an Armijo-like line search strategy. However, their methods do not guarantee a descent direction at every iteration in the vector sense, even when exact line search is employed. To address this limitation, Peng et al. \cite{Peng2025novel} proposed a modified LS variant of the NCGM in 2025, which ensures improved descent properties. The authors demonstrated the computational efficiency of their approach and established global convergence under relatively weaker assumptions.

\subsection{Motivation and Work Done}
A review of the existing literature reveals that a wide range of parameter-based, ordering-based, and iterative algorithms have been developed for solving traditional MOPs. Despite the availability of these methods for computing Pareto optimal solutions, their applicability is often limited by the assumption that the objective functions are precisely known. In many practical situations, however, such an assumption may not hold, as the objective functions may exhibit uncertainty and are more appropriately represented by closed and bounded intervals.


Subsequently, Mondal and Ghosh \cite{mondal2025steepest} proposed an SD algorithm for computing Pareto critical points of IVMOPs. In their framework, descent directions at non-Pareto critical points are obtained by solving a strongly convex quadratic subproblem, and the convergence properties of the proposed method were rigorously established. Building on this line of research, Mondal et al. \cite{mondal2026nonlinear} developed a NCGM for IVMOPs. They derived a Zoutendijk-type condition tailored to IVMOPs and established global convergence for a general class of algorithmic parameters. Furthermore, global convergence results were proved for specific parameter choices, including FR, CD, DY, and mDY methods.

However, to the best of our knowledge, the performance and convergence behavior of other important NCGMs, namely PRP, HS, and LS have not yet been investigated in the context of IVMOPs. Motivated by this gap, the present work aims to develop and analyze these three variants within the IVMOP framework. The main contributions of this study are summarized as follows:

\begin{enumerate}
	\item[(i)] We propose PRP, HS, and LS variants of the NCGM for computing Pareto critical points of IVMOPs.
	\item[(ii)] We establish global convergence results for each of the proposed variants under appropriate assumptions.
	\item[(iii)] We demonstrate the effectiveness of the proposed methods through numerical experiments and provide a comparative performance analysis using Dolan–Mor{\'e} performance profiles \cite{dolan2002benchmarking}.
\end{enumerate}

\subsection{Delineation}
The structure of this paper is organized as follows. In Section \ref{Preliminaries}, we present the necessary background on interval analysis along with key concepts related to IVMOPs. The NCGM tailored to the IVMOP is developed in Section \ref{Nonlinear Conjugate Gradient Method}. Section \ref{Convergence Analysis} is devoted to establish the convergence properties of the proposed method. Numerical performance is examined using benchmark test problems in Section \ref{Numerical Experiments}. Finally, Section \ref{Conclusion and Future Directions} concludes the paper with a summary of the results and directions for future research.

\section{Preliminaries}\label{Preliminaries}
This section introduces the basic principles of interval analysis and summarizes essential definitions along with preliminary results concerning IVMOPs. Throughout this work, the symbols ${\mathbb{R}}$ and ${\mathbb{N}}$ represent the collections of real and natural numbers, respectively.

\subsection{Interval Analysis}\label{interval analysis}
Let $\mathcal{I}\left(\mathbb{R}\right)$ denote the set of all compact intervals on ${\mathbb{R}}$.  
Let $\lambda \in \mathbb{R}$ and consider two intervals $P:=[p_1,p_2]$ and $Q:=[q_1,q_2]$ belonging to $\mathcal{I}\left(\mathbb{R}\right)$.  
Within the classical framework of interval arithmetic (see {\normalfont\cite{moore1966interval}}), we define the operations of addition, subtraction, and scalar multiplication for intervals, denoted by $P \oplus Q$, $P \ominus Q$, and $\lambda \odot P$, respectively. These operations are given by:
\begin{enumerate}
	\item[(i)] $P \oplus Q := \left[\,p_1+q_1, p_2+q_2\,\right]$;
	\item[(ii)] $P \ominus Q := \left[\,p_1-q_2, p_2-q_1\,\right]$;
	\item[(iii)] $\lambda \odot P :=
	\begin{cases}
		\left[\,\lambda p_1, \lambda p_2\,\right] & \text{if } \lambda \ge 0,\\[4pt]
		\left[\,\lambda p_2, \lambda p_1\,\right] & \text{if } \lambda < 0.
	\end{cases}
	$
\end{enumerate}

\medskip
\begin{definition}{\normalfont\cite{stefanini2008generalization}} \label{gH difference definition}
	\normalfont
	Let $P, Q \in \mathcal{I}(\mathbb{R})$. An interval $R \in \mathcal{I}(\mathbb{R})$ is said to represent the $gH$-difference of $P$ and $Q$ if it fulfills either of the equivalent conditions $P = Q \oplus R$ or $Q = P \ominus R$. In this situation, we denote
	\[
	R: = P \ominus_{gH} Q.
	\]
	For the particular case where $P:=[p_1,p_2]$ and $Q:=[q_1,q_2]$, the expression of the $gH$-difference is given by
	\[
	P \ominus_{gH} Q := \left[\min\left\{p_1 - q_1, p_2 - q_2\right\},\; \max\left\{p_1 - q_1, p_2 - q_2\right\}\right].
	\]
\end{definition}

\medskip
For a minimization interval optimization problem, to compare between two elements in the interval space $\mathcal{ I}\left({\mathbb{R}}\right)$, where smaller values get preference, we define the following ordering relation.

\medskip

\begin{definition} {\normalfont\cite{chauhan2021generalized}} \label{Dominance definition}
	\normalfont
	Let $P := [p_1,p_2]$ and $Q := [q_1,q_2]$ be members of $\mathcal{I}(\mathbb{R})$.
	\begin{enumerate}
		\item[(i)] We say that $Q$ dominates $P$ if $p_1 \ge q_1$ and $p_2 \ge q_2$. This is denoted by $P \succeq Q$.
		
		\item[(ii)] The dominance is strict if either $p_1 > q_1$ with $p_2 \ge q_2$, or $p_1 \ge q_1$ with $p_2 > q_2$. In this situation, we write $P \succ Q$.
		
		\item[(iii)] If the conditions in (i) are not fulfilled, then $P \nsucceq Q$.
		
		\item[(iv)] If the strict dominance conditions in (ii) fail, then $P \nsucc Q$.
		
		\item[(v)] The intervals $P$ and $Q$ are said to be comparable if either $P \succeq Q$ or $Q \succeq P$ holds.
		
		\item[(vi)] If neither $P$ dominates $Q$ nor $Q$ dominates $P$, then $P$ and $Q$ are called non-comparable.
	\end{enumerate}
\end{definition}

\medskip

The notation $P \succeq Q$ can also be written equivalently as $Q \preceq P$. In a similar way, $P \succ Q$, $P \nsucceq Q$, and $P \nsucc Q$ may be expressed as $Q \prec P$, $Q \npreceq P$, and $Q \nprec P$, respectively.

\medskip
\begin{definition}  {\normalfont\cite{moore1966interval}} \label{Norm on IR definition}
	\normalfont
	Assume that $\mathbb{R}_+$ denotes the set of all nonnegative real numbers. The function $\|\cdot\|_{\mathcal{I}(\mathbb{R})} : \mathcal{I}(\mathbb{R}) \to \mathbb{R}_+$ is called the norm on the interval space space $\mathcal{I}(\mathbb{R})$, which is defined for any element $P := [p_1, p_2] \in \mathcal{I}(\mathbb{R})$ by
	\[
	\left\|P\right\|_{\mathcal{I}(\mathbb{R})} := \max \left\{ \left|p_1\right|, \left|p_2\right| \right\}.
	\]
\end{definition}

\medskip
\noindent
Throughout this work, the notation $\|\cdot\|$ is used for the $l_2$-norm in $\mathbb{R}^n$, which is given by
\[\left\|x\right\|_2:=\sqrt{\sum_{j=1}^{n}x_j^2}\quad \text{ for all } x:=\left(x_1,x_2,\ldots,x_n\right)^\top\text{ in }{\mathbb{R}}^n.\]  
Let $\Phi : \mathbb{R}^n \to \mathcal{I}(\mathbb{R})$ be an IVF expressed as
\[
\Phi: =\left[\phi^{-}, \phi^{+}\right],
\]
where $\phi^{-}, \phi^{+} : \mathbb{R}^n \to \mathbb{R}$ are two functions such that
\[
\phi^{-}(x) \le \phi^{+}(x)  \text{ for all } x \in \mathbb{R}^n.
\]
The real-valued functions $\phi^{-}$ and $\phi^{+}$ are called the lower endpoint function and the upper endpoint function, respectively, corresponding to the IVF $\Phi$.

\medskip
\begin{definition}{\normalfont\cite{ghosh2017newton}} \label{gH continuity of IVF definition}
	\normalfont
	An IVF $\Phi : \mathbb{R}^n \to \mathcal{I}(\mathbb{R})$ is said to be $gH$-continuous at a point $x^\dagger \in \mathbb{R}^n$ if
	\[
	\underset{\left\|h\right\|\to 0}{\lim} \left( \Phi(x^\dagger + h) \ominus_{gH} \Phi(x^\dagger) \right) = \left[0,0\right].
	\]
\end{definition}

\medskip
\begin{definition} {\normalfont\cite{ghosh2022generalized}}
\label{gH Lipschitz continuity of IVF definition}
	\normalfont
	An IVF $\Phi : \mathbb{R}^n \to \mathcal{I}(\mathbb{R})$ is said to be $gH$-Lipschitz continuous with a Lipschitz constant $L > 0$ if
	\[
	\| \Phi(x) \ominus_{gH} \Phi(y) \|_{\mathcal{I}(\mathbb{R})} \le L \|x - y\| \quad \text{for every } x, y \in \mathbb{R}^n.
	\]
\end{definition}

\medskip
\begin{definition} {\normalfont\cite{debnath2022generalized}} \label{gH derivative of IVF definition}
	\normalfont
	Let $\Omega \subseteq \mathbb{R}$ be an open set and consider an IVF $\Phi : \Omega \to \mathcal{I}(\mathbb{R})$.  
	The  function $\Phi$ is said to be $gH$-differentiable at a point $x^{ \dagger} \in \Omega$ if the following limit is well-defined:
	\[
	\Phi'(x^{ \dagger}) :=
	\lim_{h \to 0} \tfrac{1}{h} \odot \left( \Phi(x^{ \dagger} + h) \ominus_{gH} \Phi(x^{ \dagger}) \right).
	\]
	Whenever this limit exists, it is referred to as the $gH$-derivative of $\Phi$ at $x^{ \dagger}$.
\end{definition}

\medskip
\begin{definition} {\normalfont\cite{debnath2022generalized}} \label{gH partial derivative of IVF definition}
	\normalfont
	Assume that $\Omega \subseteq \mathbb{R}^n$ is an open set and consider an IVF $\Phi: \Omega \to \mathcal{I}(\mathbb{R})$.  
	Fix a point $\bar{x} = (\bar{x}_1, \ldots, \bar{x}_n)^\top \in \Omega$. For each index $j \in \{1,2,\ldots,n\}$, define the  function of a single variable
	\[
	\Phi_j(t) := \Phi(\bar{x}_1, \ldots, \bar{x}_{j-1}, t, \bar{x}_{j+1}, \ldots, \bar{x}_n).
	\]
	If $\Phi_j$ is $gH$-differentiable at $t = \bar{x}_j$, then the $gH$-partial derivative of $\Phi$ corresponding to the $j$-th coordinate at $\bar{x}$ is defined by
	\[
	\partial^{gH}_j \Phi(\bar{x}) := \Phi_j'(\bar{x}_j), \quad j = 1,2, \ldots, n.
	\]
\end{definition}

\medskip
\begin{definition}{\normalfont\cite{debnath2022generalized}} \label{gH gradient of IVF definition}
	\normalfont
	Assume that $\Omega \subseteq \mathbb{R}^n$ is an open set and consider that $\Phi : \Omega \to \mathcal{I}(\mathbb{R})$ is an IVF.  
	At a point $\bar{x} \in \Omega$, the $gH$-gradient of $\Phi$, denoted by $\nabla_{gH} \Phi(\bar{x})$, is defined as the vector composed of all $gH$-partial derivatives of $\Phi$ at $\bar{x}$. That is,
	\[
	\nabla_{gH} \Phi(\bar{x}): = \left( \partial^{gH}_1 \Phi(\bar{x}), \partial^{gH}_2 \Phi(\bar{x}), \ldots, \partial^{gH}_n \Phi(\bar{x}) \right)^\top.
	\]
\end{definition}

\medskip
\begin{definition}{\normalfont\cite{ghosh2022generalized}} \label{Linear IVF definition}
	\normalfont
	Assume that $\Omega \subseteq \mathbb{R}^n$ is a vector subspace, and consider that $\Phi : \Omega \to \mathcal{I}(\mathbb{R})$ is an IVF.  
	The  function $\Phi$ is said to be linear if, for every vector $x := (x_1, x_2, \ldots, x_n)^\top \in \Omega$, it admits the representation
	\[
	\Phi(x) := \bigoplus_{k=1}^{n} \left( \Phi(e_k) \odot x_k \right),
	\]
	where $\left\{e_1,e_2,\ldots,e_n\right\}$ is the standard ordered basis of $\mathbb{R}^n$, and $\bigoplus$ indicates addition of successive intervals.
\end{definition}
\medskip
\begin{definition}{\normalfont\cite{ghosh2022generalized}}
\label{$gH$-differentiabe IVF definition}
	\normalfont
	Assume that $\Omega \subseteq \mathbb{R}^n$ is an open set and consider an IVF $\Phi : \Omega \to \mathcal{I}(\mathbb{R})$.  
	The  function $\Phi$ is said to be $gH$-differentiable at $x^ \dagger \in \Omega$ if there exist a linear IVF $L_{x^ \dagger} : \mathbb{R}^n \to \mathcal{I}(\mathbb{R})$, a remainder term $E\big(\Phi(x^ \dagger;h)\big)$, and a scalar $\delta > 0$ such that for all $h \in \mathbb{R}^n$ with $\|h\| < \delta$, the relation
	\[
	\Phi(x^ \dagger + h) \ominus_{gH} \Phi(x^ \dagger)
	:= L_{x^ \dagger}(h) \oplus \|h\| \odot E\left(\Phi(x^ \dagger;h)\right)
	\]
	holds, where the remainder term satisfies $E\left(\Phi(x^ \dagger;h)\right) \to [0,0]  \text{ as } \|h\| \to 0.$

	\medskip
	
	If the above condition is fulfilled at every point of $\Omega$, then $\Phi$ is called $gH$-differentiable on $\Omega$.
\end{definition}
\medskip
\begin{lemma}{\normalfont\cite{ghosh2022generalized}}
\normalfont
\label{linear IVF lemma}
	Assume that $\Omega \subseteq \mathbb{R}^n$ is an open set and consider that $\Phi : \Omega \to \mathcal{I}(\mathbb{R})$ is an IVF.  
	Suppose that $\Phi$ is $gH$-differentiable at $x^ \dagger \in \Omega$. Then, there exists a constant $\delta > 0$ such that, for any direction $h \in \mathbb{R}^n$ and any scalar $\alpha$ satisfying $|\alpha| \|h\| < \delta$, one has
	\[
	\lim_{\alpha \to 0} \tfrac{1}{\alpha} \odot \left( \Phi(x^ \dagger + \alpha h) \ominus_{gH} \Phi(x^ \dagger) \right)
	:= L_{x^ \dagger}(h),
	\]
	where $L_{x^ \dagger}$ denotes a linear IVF.
	
	\medskip
	
	Moreover, if the $gH$-gradient of $\Phi$ exists at a point $x^ \dagger$, namely
	\[
	\nabla_{gH} \Phi(x^ \dagger) :=
	\left( \partial^{gH}_1 \Phi(x^ \dagger), \partial^{gH}_2 \Phi(x^ \dagger), \ldots, \partial^{gH}_n \Phi(x^ \dagger) \right)^\top,
	\]
	then the  function $L_{x^ \dagger}$ can be written as
	\[
	L_{x^ \dagger}(h) := \nabla_{gH} \Phi(x^ \dagger)^\top \odot h
	= \bigoplus_{j=1}^{n} \partial^{gH}_j \Phi(x^ \dagger) \odot h_j
	\quad \text{for all } h := (h_1, h_2, \ldots, h_n)^\top \in \mathbb{R}^n.
	\]
\end{lemma}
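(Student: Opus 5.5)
The plan has two parts. First I derive the directional limit directly from the defining relation in the definition of $gH$-differentiability. Then I identify the linear IVF $L_{x^\dagger}$ coordinatewise with the $gH$-partial derivatives.

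\textbf{Step 1: the directional limit.} Fix the $\delta>0$, the linear IVF $L_{x^\dagger}$ and the remainder $E$ given by $gH$-differentiability at $x^\dagger$. For $h\in\mathbb{R}^n$ and $\alpha\neq 0$ with $|\alpha|\|h\|<\delta$, apply the defining relation to the increment $\alpha h$:
\[
\Phi(x^\dagger+\alpha h)\ominus_{gH}\Phi(x^\dagger)=L_{x^\dagger}(\alpha h)\oplus |\alpha|\|h\|\odot E\big(\Phi(x^\dagger;\alpha h)\big).
\]
Linearity in the sense of the definition of a linear IVF gives $L_{x^\dagger}(\alpha h)=\alpha\odot L_{x^\dagger}(h)$. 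To see this, write $\bigoplus_k \Phi(e_k)\odot(\alpha h_k)$ and use $\lambda\odot(\mu\odot P)=(\lambda\mu)\odot P$ together with $\lambda\odot(P\oplus Q)=\lambda\odot P\oplus\lambda\odot Q$; both hold for scalar multiplication of intervals.

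Next multiply by $\tfrac1\alpha$. I use that $\tfrac1\alpha\odot(P\oplus Q)=\tfrac1\alpha\odot P\oplus\tfrac1\alpha\odot Q$. This yields
\[
\tfrac1\alpha\odot\big(\Phi(x^\dagger+\alpha h)\ominus_{gH}\Phi(x^\dagger)\big)=L_{x^\dagger}(h)\oplus \tfrac{|\alpha|}{\alpha}\|h\|\odot E\big(\Phi(x^\dagger;\alpha h)\big).
\]
The second summand has $\|\cdot\|_{\mathcal{I}(\mathbb{R})}$-norm equal to $\|h\|\,\|E\|_{\mathcal{I}(\mathbb{R})}$, which tends to $0$ as $\alpha\to0$. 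The resulting difference from $L_{x^\dagger}(h)$, measured by the $gH$-difference norm, is bounded by the norm of that summand, so the limit is $L_{x^\dagger}(h)$. If $h=0$ the claim is trivial.

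\textbf{Step 2: identification with the gradient.} Take $h=e_j$ in Step 1. The resulting limit is exactly the one-variable $gH$-derivative of $\Phi_j$ at $\bar x_j=x^\dagger_j$, so $\partial^{gH}_j\Phi(x^\dagger)=L_{x^\dagger}(e_j)$. Substituting into the representation of a linear IVF gives $L_{x^\dagger}(h)=\bigoplus_j \partial^{gH}_j\Phi(x^\dagger)\odot h_j$, which is $\nabla_{gH}\Phi(x^\dagger)^\top\odot h$.

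\textbf{Main obstacle.} Interval arithmetic is not a vector space. I therefore need to check that the distributive and sign manipulations above are legitimate: scalar multiplication distributes over $\oplus$ for a single scalar, but not over sums of scalars. I also need that convergence of $A\oplus B_\alpha$ to $A$ follows from $B_\alpha\to[0,0]$, which I would verify endpointwise. Finally, the uniqueness of the limit, and hence the well-definedness of $L_{x^\dagger}$ independent of the representation, follows from Step 1 itself.
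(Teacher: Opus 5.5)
Your proof is correct. Every interval-arithmetic identity you rely on holds: the homogeneity $L_{x^\dagger}(\alpha h)=\alpha\odot L_{x^\dagger}(h)$, distributivity of a single scalar over $\oplus$, $\lambda\odot(\mu\odot P)=(\lambda\mu)\odot P$, and $(A\oplus B)\ominus_{gH}A=B$ (so the $gH$-distance to $L_{x^\dagger}(h)$ is exactly $\|h\|\,\|E\|_{\mathcal{I}(\mathbb{R})}$). Taking $h=e_j$ then correctly gives $L_{x^\dagger}(e_j)=\partial^{gH}_j\Phi(x^\dagger)$. As a by-product, this shows the $gH$-gradient always exists under $gH$-differentiability, so the ``Moreover'' hypothesis is automatic. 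The paper gives no proof of this lemma (it is quoted from \cite{ghosh2022generalized}), and your direct derivation from the definitions of $gH$-differentiability and of a linear IVF is the standard argument.
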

\medskip

\begin{definition}{\normalfont\cite{wu2007karush}} 
\label{Convex IVF definition}
	\normalfont
	Assume that $\Omega \subseteq \mathbb{R}^n$ is a convex set and consider $\Phi : \Omega \to \mathcal{I}(\mathbb{R})$ is an IVF.  
	The  function $\Phi$ is said to be convex if, for any $x, y \in \Omega$ and for any scalar $\lambda \in [0,1]$, it satisfies
	\[
	\Phi\big(\lambda x + (1 - \lambda)y\big) \preceq \lambda \odot \Phi(x) \oplus (1 - \lambda) \odot \Phi(y).
	\]
\end{definition}

\medskip
\subsection{Interval-Valued Multiobjective Optimization Problem}
Assume that $\Lambda:=\left\{1,2,\ldots,m\right\}$ is the index set, which is used throughout the article.
Consider a vector-valued  function $G : \mathbb{R}^n \to \mathcal{I}(\mathbb{R})^m$ defined componentwise as
\[
G := \left(G_1, G_2, \ldots, G_m\right)^\top.
\]
The aim of this work is to study the IVMOP given by
\begin{align}\label{minG(x)}
	\min_{x \in \mathbb{R}^n} G(x).
\end{align}
In the IVMOP \eqref{minG(x)}, it is assumed that each component  function $G_i : \mathbb{R}^n \to \mathcal{I}(\mathbb{R})$ is $gH$-continuously differentiable, which are written in terms of their corresponding lower and upper endpoint functions as
\[
G_i := \left[\underline{G}_i, \overline{G}_i\right] \quad \text{for all } i \in \Lambda.
\]
Before developing the proposed methodology, we first recall the definitions of weak Pareto optimal, Pareto optimal, and Pareto critical points corresponding to the IVMOP \eqref{minG(x)}.

\medskip
\begin{definition}
{\normalfont\cite{mondal2025steepest}}
\label{weakly Pareto optimal}
	\normalfont
	A point $x^{ \star} \in \mathbb{R}^n$ is said to be a weak Pareto optimal of the IVMOP \eqref{minG(x)} if 
	\[
	\nexists\, x \in \mathbb{R}^n \text{ satisfying } 
	G_i(x) \prec G_i(x^{ \star}) \quad \text{for every } i \in \Lambda.
	\]
\end{definition} 

\medskip
\begin{definition} {\normalfont\cite{mondal2025steepest}}
\label{Pareto optimal}
	\normalfont
	A point $x^{ \star} \in \mathbb{R}^n$ is termed a Pareto optimal of the IVMOP \eqref{minG(x)} if 
	\[
	\nexists\, x \in \mathbb{R}^n \text{ satisfying } 
	G_i(x) \preceq G_i(x^{ \star}) \quad \text{for every } i \in \Lambda.
	\]
\end{definition}
\medskip

\begin{definition} {\normalfont\cite{mondal2025steepest}}
\label{Pareto critical}
	\normalfont
	A point $x^{ \dagger} \in \mathbb{R}^n$ is said to be a Pareto critical of the IVMOP \eqref{minG(x)} if 
	\[
	\nexists\, v \in \mathbb{R}^n \text{ satisfying }
	\nabla_{gH} G_i(x^{ \dagger})^\top \odot v \prec [0,0]
	\quad \text{for every } i \in\Lambda.
	\]
\end{definition}
\medskip
\begin{lemma}{\normalfont\cite{mondal2025steepest}}
\label{interrelation lemma of Pareto optimal and critical}
\normalfont
	Suppose that $G \in C_{gH}^1\left(\mathbb{R}^n, \mathcal{I}(\mathbb{R})^m\right)$, that is, for each $i\in\Lambda$, the component $G_i$ is $gH$-continuously differentiable.
	\begin{enumerate}
		\item[(i)]  
		If a point $x^{ \star}$ is a weak Pareto optimal of the IVMOP \eqref{minG(x)}, then it necessarily satisfies the conditions of Pareto criticality for the same problem.
		
		\item[(ii)]  
		Furthermore, suppose that for each index $i\in\Lambda$, the corresponding IVF $G_i$ is convex.  
		Then, any $x^{ \star} \in \mathbb{R}^n$ that is Pareto critical for the IVMOP \eqref{minG(x)} is also a weak Pareto optimal.
	\end{enumerate}
\end{lemma}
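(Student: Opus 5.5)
We argue both parts by contraposition, using the linear approximation supplied by Lemma \ref{linear IVF lemma}. For a $gH$-differentiable $G_i$ and a direction $v$, that lemma gives
\[
\lim_{\alpha\to 0^+}\tfrac{1}{\alpha}\odot\bigl(G_i(x+\alpha v)\ominus_{gH}G_i(x)\bigr)=\nabla_{gH}G_i(x)^\top\odot v .
\]
We will also use the endpoint description of $gH$-differences from Definition \ref{gH difference definition}. For intervals $P,Q$, the relation $P\ominus_{gH}Q\prec[0,0]$ holds if and only if $P\prec Q$. This is checked directly: $P\ominus_{gH}Q\prec[0,0]$ means both $p_1-q_1\le 0$ and $p_2-q_2\le 0$, with at least one strict, which is exactly $P\prec Q$. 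The same endpoint check shows that $P\ominus_{gH}Q\preceq[0,0]$ if and only if $P\preceq Q$.

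\emph{Part (i).} Suppose $x^\star$ is not Pareto critical. Then there is $v$ with $D_i:=\nabla_{gH}G_i(x^\star)^\top\odot v=[d_i^1,d_i^2]\prec[0,0]$ for every $i\in\Lambda$. Write
\[
\Delta_i(\alpha):=\tfrac1\alpha\odot\bigl(G_i(x^\star+\alpha v)\ominus_{gH}G_i(x^\star)\bigr).
\]
By Lemma \ref{linear IVF lemma}, $\Delta_i(\alpha)\to D_i$ as $\alpha\to0^+$, with endpoints converging in the norm of Definition \ref{Norm on IR definition}.

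The main obstacle is that $D_i\prec[0,0]$ only forces $d_i^1,d_i^2\le0$ with one of them strict. A zero endpoint of the limit is not preserved under small perturbations, so $\Delta_i(\alpha)\prec[0,0]$ need not follow from convergence alone. To handle this, first replace $v$ by a direction giving strict negativity of both endpoints. The endpoints of $\nabla_{gH}G_i^\top\odot v$ are of the form $\min/\max$ of sums of the endpoints of the partials times $v_j$. Following the approach of \cite{mondal2025steepest}, we aim to show that the existence of $v$ with $D_i\prec[0,0]$ for all $i$ yields some $\tilde v$ with $\overline{D}_i<0$ for all $i$. This would come, for example, from the characterization of criticality by the steepest-descent subproblem $\min_v \max_i \|\cdot\|$-type functional, which the paper uses later. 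If that reduction is available, both endpoints of $\Delta_i(\alpha)$ are eventually strictly negative. Then $G_i(x^\star+\alpha v)\prec G_i(x^\star)$ for all $i$ and small $\alpha$, contradicting weak Pareto optimality.

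If the reduction is unavailable, we instead exploit that a zero upper endpoint of $D_i$ forces $d_i^1<0$. Using the $\min/\max$ structure of the $gH$-difference, a strictly negative endpoint keeps the quotient strictly below zero at that endpoint for small $\alpha$. We would then combine this with a first-order argument along the endpoint functions to keep the other endpoint $\le 0$, and this is the delicate step.

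\emph{Part (ii).} Suppose $x^\star$ is Pareto critical but not weakly Pareto optimal. Then some $y$ satisfies $G_i(y)\prec G_i(x^\star)$ for all $i$. Set $v=y-x^\star$. By convexity (Definition \ref{Convex IVF definition}), for $\lambda\in(0,1]$,
\[
G_i(x^\star+\lambda v)\preceq \lambda\odot G_i(y)\oplus(1-\lambda)\odot G_i(x^\star).
\]
Taking the $gH$-difference with $G_i(x^\star)$ and using the endpoint formulas, this yields
\[
\tfrac1\lambda\odot\bigl(G_i(x^\star+\lambda v)\ominus_{gH}G_i(x^\star)\bigr)\preceq G_i(y)\ominus_{gH}G_i(x^\star).
\]
Letting $\lambda\to0^+$ and invoking Lemma \ref{linear IVF lemma}, we get
\[
\nabla_{gH}G_i(x^\star)^\top\odot v\preceq G_i(y)\ominus_{gH}G_i(x^\star)\prec[0,0]
\]
for every $i$. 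Hence $\nabla_{gH}G_i(x^\star)^\top\odot v\prec[0,0]$ for all $i$, contradicting Pareto criticality.

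The step needing care in (ii) is the inequality with the $gH$-difference after the convexity step. We would verify it endpoint by endpoint, since $\ominus_{gH}$ is monotone in its first argument with respect to $\preceq$ by Definition \ref{gH difference definition}.
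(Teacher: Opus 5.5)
\textbf{Verdict: part (i) has a genuine gap, and it cannot be closed, because (i) is false under the definitions as written.}

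The paper does not prove this lemma; it quotes it from \cite{mondal2025steepest}, so your argument has to stand on its own. Your part (ii) does. The endpoint check of the convexity inequality, the identity $(\lambda\odot G_i(y)\oplus(1-\lambda)\odot G_i(x^\star))\ominus_{gH}G_i(x^\star)=\lambda\odot(G_i(y)\ominus_{gH}G_i(x^\star))$, the passage $\lambda\to0^+$, and the transitivity $D\preceq E\prec[0,0]\Rightarrow D\prec[0,0]$ are all correct.

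Part (i) is not proved: both of your routes stop at the decisive step. Recall that $P\prec[0,0]$ only requires one strict endpoint, so the upper endpoint may be $0$. Take $m=n=1$ and $G(x)=[\min\{x^2-x,x^2\},\max\{x^2-x,x^2\}]$.
\begin{itemize}
\item For small $h$ one checks $G(x+h)\ominus_{gH}G(x)=[2x-1,2x]\odot h\oplus[h^2,h^2]$. So $G$ is $gH$-differentiable, with continuous $gH$-gradient $[2x-1,2x]$.
\item At $x^\star=0$ we have $G(0)=[0,0]$ and $\overline{G}(x)\ge x^2$. Hence $G(x)\prec G(0)$ would force $\overline{G}(x)\le0$, so $x=0$, which is impossible. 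Thus $0$ is weakly Pareto optimal.
\item But $\nabla_{gH}G(0)\odot 1=[-1,0]\prec[0,0]$, so $0$ is not Pareto critical.
\end{itemize}
In this example $\overline{g}_0(v)=\max\{0,-v\}\ge0$ for every $v$. So your hoped-for reduction to a $\tilde v$ with $\overline{D}<0$ does not exist. Lemma \ref{descent direction finding lemma}(iii), which would supply it, also fails here, since $\xi(0)=0$. Your fallback fails too: along $v=1$ the upper endpoint increases by exactly $\alpha^2>0$, so the other endpoint cannot be kept $\le0$.

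What can be proved is a version with strict notions on both sides.
\begin{itemize}
\item Suppose Pareto criticality means that no $v$ satisfies $\overline{g}^i_{x^\star}(v)<0$ for all $i\in\Lambda$. This is exactly the notion detected by $\xi(x^\star)=0$. Then your first route closes in two lines. For $\alpha>0$, the upper endpoint of $\Delta_i(\alpha)$ is the larger of the two endpoint difference quotients and tends to $\overline{g}^i_{x^\star}(v)<0$. So both endpoints of every $G_i$ strictly decrease for small $\alpha$.
\item This strengthening must be matched on the optimality side, or (ii) breaks. Consider the convex IVF $G(x)=[-x-1,\overline{G}(x)]$, with $\overline{G}(x)=x^2$ for $x<0$ and $\overline{G}(x)=0$ for $x\ge0$. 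It has $\nabla_{gH}G(0)=[-1,0]$, so $0$ is critical in the strict sense. Yet $G(1)=[-2,0]\prec G(0)=[-1,0]$.
\item If weak Pareto optimality likewise excludes only those $x$ at which both endpoints of every $G_i$ are strictly smaller, your (ii) argument goes through verbatim. The upper endpoint of $G_i(y)\ominus_{gH}G_i(x^\star)$ is then negative, and your $\preceq$ inequality passes it to $\nabla_{gH}G_i(x^\star)^\top\odot(y-x^\star)$.
\end{itemize}
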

\medskip
\begin{definition} {\normalfont\cite{mondal2025steepest}}
\label{Descent direction}
	\normalfont
	A vector $v \in \mathbb{R}^n$ is said to satisfy descent condition for the IVMOP \eqref{minG(x)} at a fixed point $x \in \mathbb{R}^n$ if there exists a scalar $\delta > 0$ such that
	\[
	G_i(x + t v) \prec G_i(x) \quad \text{for every } i \in \Lambda \text{ and for every }t \in (0, \delta).
	\]
	The vector $v \in \mathbb{R}^n$ satisfying the above decency condition is called the descent direction for each IVF $G_i$ at the given point $x \in \mathbb{R}^n$.
\end{definition}

\medskip
Note that if $v$ is a descent direction for each $gH$-differentiable IVF $G_i$, $i \in \Lambda$, then it follows from Lemma \ref{linear IVF lemma} that
\begin{align*}
	\nabla_{gH} G_i(x)^\top\odot v:=\underset{t\to0}{\lim} \tfrac{1}{t}\odot\left[G_i(x+tv)\ominus_{gH} G_i(x)\right]\prec[0,0] \text{ for every } i\in\Lambda.
\end{align*}
In order to identify a vector $v \in \mathbb{R}^n$ that produces descent for the IVMOP \eqref{minG(x)} at a given point $x \in \mathbb{R}^n$, one aims to find $v$ such that
\[\nabla_{gH} G_i(x)^\top\odot v\prec[0,0] \text{ for every }i\in\Lambda.\]
To describe these directions at a fixed point $x$, we define, for every $i \in \Lambda$, an associated IVF $g_x^i : \mathbb{R}^n \to \mathcal{I}(\mathbb{R})$ as follows.
\begin{align}\label{gxi}
	g_x^i(v):=\nabla_{gH} G_i(x)^\top\odot v.
\end{align}
For each $j$-th coordinate, the corresponding component of the $gH$-gradient of $G_i$ at $x$ is denoted by
\[
\left[\underline{\nabla_{gH}G_i}(x)_j,\overline{\nabla_{gH}G_i}(x)_j\right]
:=\left[\min\left\{\frac{\partial \underline{G}_i(x)}{\partial x_j},\frac{\partial \overline{G}_i(x)}{\partial x_j}\right\},
\max\left\{\frac{\partial \underline{G}_i(x)}{\partial x_j},\frac{\partial \overline{G}_i(x)}{\partial x_j}\right\}\right].
\]
Next, consider the IVF $g_x^i:{\mathbb{R}}^n\to \mathcal{ I}\left({\mathbb{R}}\right)$ described through its endpoint  functions $\underline{g}_x^i$ and $\overline{g}_x^i$. Let $\left|v\right|:=\left(\left|v_1\right|,\left|v_2\right|,\ldots,\left|v_n\right|\right)^\top$. As established in \cite{mondal2025steepest}, these endpoint functions admit the compact representation
\begin{equation}\label{gxi-lower-upper}
	\begin{rcases}
		\begin{aligned}
			\underline{g}_x^i(v) &:= \tfrac{1}{2}\left(\underline{\nabla_{gH} G_i}(x)+\overline{\nabla_{gH} G_i}(x)\right)^\top v 
			-\tfrac{1}{2}\left(\overline{\nabla_{gH} G_i}(x)-\underline{\nabla_{gH} G_i}(x)\right)^\top |v|,\\
			\overline{g}_x^i(v) &:= \tfrac{1}{2}\left(\underline{\nabla_{gH} G_i}(x)+\overline{\nabla_{gH} G_i}(x)\right)^\top v 
			+\tfrac{1}{2}\left(\overline{\nabla_{gH} G_i}(x)-\underline{\nabla_{gH} G_i}(x)\right)^\top |v|.
		\end{aligned}
	\end{rcases}
\end{equation}

\medskip

Define the function $\Psi:{\mathbb{R}}^m\to {\mathbb{R}}$ by
\[
\Psi(z):=\max_{i\in\Lambda} ~z_i\quad\text{for all }z:=\left(z_1,z_2,\ldots,z_m\right)^\top \in {\mathbb{R}}^m.
\]
For a fixed $x\in{\mathbb{R}}^n$, introduce $\Upsilon_x:{\mathbb{R}}^n\to {\mathbb{R}}^m$ as
\[
\Upsilon_x(v):=\left(\overline{g}_x^1(v),\overline{g}_x^2(v),\ldots,\overline{g}_x^m(v)\right)^\top.
\]
Consequently, the composite  function $\Psi\circ\Upsilon_x:{\mathbb{R}}^n\to{\mathbb{R}}$ is expressed as
\begin{align}\label{psi-phi def}
	\Psi\circ\Upsilon_x(v):=\max_{i\in\Lambda}~\overline{g}_x^i(v).
\end{align}

\medskip

We now consider the optimization problem
\begin{align}\label{unconstrained min}
	\min_{v\in{\mathbb{R}}^n}\left(\Psi\circ\Upsilon_x(v)+\tfrac{1}{2}\|v\|^2\right).
\end{align}
The objective function in \eqref{unconstrained min} is strongly convex (see, for example, Lemma 3.1 in \cite{mondal2025steepest}), which guarantees the existence of a unique optimal solution.

At a fixed point $x$, assume that $v(x)$ and $\xi(x)$ denote the unique optimal solution and the optimal objective value of the unconstrained optimization problem \eqref{unconstrained min}, respectively, that is,
\begin{align}\label{v(x) and xi(x)} 
	v(x):=\underset{v\in{\mathbb{R}}^n}{\argmin}\: \left( \Psi\circ\Upsilon_x(v)+\tfrac{1}{2}\|v\|^2\right) \text{ and } \xi(x):=\underset{v\in{\mathbb{R}}^n}{\min} \left( \Psi\circ\Upsilon_x(v)+\tfrac{1}{2}\|v\|^2\right).
\end{align}

As shown in \cite{mondal2025steepest}, the computation of $v(x)$ can be reformulated as the following constrained quadratic program:
\begin{equation}\label{equivalent constrained problem}
	\begin{rcases}
		\begin{aligned}
			\underset{u,v\in{\mathbb{R}}^n,\tau\in{\mathbb{R}}}{\min}&\left(\tau+\tfrac{1}{2}\|v\|^2\right)\\
			\text{subject to } &\left(\underline{\nabla_{gH} G_i}(x)+\overline{\nabla_{gH} G_i}(x)\right)^\top v+\left({\overline{\nabla_{gH} G_i}(x)-\underline{\nabla_{gH} G_i}(x)}\right)^\top u\leq2\tau, i\in\Lambda,\\
			&-u_j\leq v_j \leq u_j, j=1,2,\ldots n.
		\end{aligned}
	\end{rcases}
\end{equation}

\medskip

\noindent
The vector $v(x)$ obtained from \eqref{equivalent constrained problem} plays an important role in identifying Pareto criticality for the IVMOP \eqref{minG(x)}. In particular, the quantities $\|v(x)\|$ and $\xi(x)$ provide useful indicators for detecting such critical points.
\medskip
\begin{lemma}\label{descent direction finding lemma}
\normalfont
\cite{mondal2025steepest}
	Assume that $v(x) \text{ in } \mathbb{R}^n$ is the unique optimal solution of the unconstrained optimization problem \eqref{unconstrained min}, i.e.,
	\[v(x):=\underset{v\in{\mathbb{R}}^n}{\argmin}\: \left( \Psi\circ\Upsilon_x(v)+\tfrac{1}{2}\|v\|^2\right),\]
	and denote by $\xi(x)$ the corresponding optimal value:
	\[
	\xi(x) := \min_{v \in \mathbb{R}^n} \left( \Psi \circ \Upsilon_x(v) + \tfrac{1}{2}\|v\|^2 \right).
	\]
	Then, the following assertions hold:
	\begin{enumerate}
		\item[(i)] For any $x \in \mathbb{R}^n$, it holds that $\xi(x) \le 0$.
		
		\item[(ii)] If $x$ is a Pareto critical point of the IVMOP \eqref{minG(x)}, then $v(x)$ coincides with the zero vector in $\mathbb{R}^n$, and consequently $\xi(x) = 0$.
		
		\item[(iii)] If $x$ is not a Pareto critical point of the IVMOP \eqref{minG(x)}, then $\xi(x) < 0$, and in this case $v(x) \ne 0$ provides a descent direction for each IVF $G_i$ of the IVMOP \eqref{minG(x)} at $x$.
		
		\item[(iv)] The  function $v:{\mathbb{R}}^n\to {\mathbb{R}}^n$ is bounded on any compact subset of $\mathbb{R}^n$.
		
		\item[(v)] The  function $\xi:{\mathbb{R}}^n\to {\mathbb{R}}$ is continuous over $\mathbb{R}^n$.
	\end{enumerate}
\end{lemma}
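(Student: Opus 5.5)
Throughout, write $\theta_x(v):=\Psi\circ\Upsilon_x(v)+\tfrac12\|v\|^2$ and $c_i(x):=\tfrac12\bigl(\underline{\nabla_{gH}G_i}(x)+\overline{\nabla_{gH}G_i}(x)\bigr)$, $r_i(x):=\tfrac12\bigl(\overline{\nabla_{gH}G_i}(x)-\underline{\nabla_{gH}G_i}(x)\bigr)\ge 0$. By \eqref{gxi-lower-upper}, $\overline{g}_x^i(v)=c_i(x)^\top v+r_i(x)^\top|v|$ and $\underline{g}_x^i(v)=c_i(x)^\top v-r_i(x)^\top|v|$. Hence each $\overline{g}_x^i$ is convex, positively homogeneous and Lipschitz. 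Also $\underline{g}_x^i\le\overline{g}_x^i$, so $g_x^i(v)\prec[0,0]$ holds as soon as $\overline{g}_x^i(v)<0$.

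\textbf{Step 1: parts (i), (ii), (iii).} Part (i) is immediate, since $\xi(x)\le\theta_x(0)=0$.

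For (iii), suppose $x$ is not Pareto critical. Then some $\bar v$ satisfies $g_x^i(\bar v)\prec[0,0]$ for all $i$, so $\overline{g}_x^i(\bar v)\le 0$ for every $i$. I will first upgrade this to a strict inequality $\Psi\circ\Upsilon_x(\bar v)<0$. The strict dominance $g_x^i(\bar v)\prec[0,0]$ means $\overline{g}_x^i(\bar v)\le0$ and $\underline{g}_x^i(\bar v)\le 0$ with at least one strict. In the borderline case $\overline{g}_x^i(\bar v)=0>\underline{g}_x^i(\bar v)$, I need a way to make $\overline{g}_x^i$ strictly negative. If that cannot be done, I will instead argue directly with a direction for which $\max_i\overline{g}^i_x<0$. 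This is how \cite{mondal2025steepest} sets up criticality; equivalently, non-criticality yields $\Psi\circ\Upsilon_x(\bar v)<0$, and I expect this step to be the main obstacle.

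Given such a $\bar v$, homogeneity yields $\theta_x(t\bar v)=t\,\Psi\circ\Upsilon_x(\bar v)+\tfrac{t^2}{2}\|\bar v\|^2<0$ for small $t>0$, so $\xi(x)<0$ and $v(x)\neq0$. Moreover $\Psi\circ\Upsilon_x(v(x))\le\xi(x)-\tfrac12\|v(x)\|^2<0$, so $\overline{g}_x^i(v(x))<0$ for all $i$. Then Lemma \ref{linear IVF lemma} gives $\tfrac1t\odot\bigl(G_i(x+tv(x))\ominus_{gH}G_i(x)\bigr)\to g_x^i(v(x))$, whose upper endpoint is negative. Hence for small $t$ both endpoints of $G_i(x+tv(x))\ominus_{gH}G_i(x)$ are negative, which gives $G_i(x+tv(x))\prec G_i(x)$.

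For (ii), suppose $x$ is Pareto critical and $v(x)\ne0$. Then $\xi(x)\le 0$ forces $\Psi\circ\Upsilon_x(v(x))\le-\tfrac12\|v(x)\|^2<0$. So $v(x)$ satisfies $g_x^i(v(x))\prec[0,0]$ for all $i$, contradicting criticality. Thus $v(x)=0$ and $\xi(x)=\theta_x(0)=0$.

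\textbf{Step 2: part (iv).} Since $\xi(x)\le0$, we have $\tfrac12\|v(x)\|^2\le-\Psi\circ\Upsilon_x(v(x))\le\max_i(\|c_i(x)\|+\|r_i(x)\|)\,\|v(x)\|$. Hence $\|v(x)\|\le 2\max_i(\|c_i(x)\|+\|r_i(x)\|)$. The endpoints of the $gH$-gradient are continuous by $gH$-continuous differentiability, since they are mins and maxes of continuous partials. So the right-hand side is bounded on compact sets.

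\textbf{Step 3: part (v).} For $x,y$ in a compact set $K$, $|\overline{g}_x^i(v)-\overline{g}_y^i(v)|\le(\|c_i(x)-c_i(y)\|+\|r_i(x)-r_i(y)\|)\|v\|$. Because the max map $\Psi$ is 1-Lipschitz in the sup norm, $|\theta_x(v)-\theta_y(v)|\le\omega(x,y)\|v\|$, where $\omega(x,y)\to0$ as $y\to x$ by continuity. Then
\[
\xi(y)\le\theta_y(v(x))\le\xi(x)+\omega(x,y)\|v(x)\|,
\]
and symmetrically with $x$ and $y$ exchanged. Combining these with the bound on $\|v\|$ over $K$ from Step 2 gives $|\xi(x)-\xi(y)|\le\omega(x,y)\sup_K\|v\|\to0$, so $\xi$ is continuous.
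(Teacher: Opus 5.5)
Your arguments for (i), (ii), (iv) and (v) are sound. The paper gives no proof of its own here; it imports the lemma from \cite{mondal2025steepest}. So everything turns on (iii), and there your proposal has a genuine gap at exactly the step you flag as the ``main obstacle.'' Your fallback, arguing ``directly with a direction for which $\max_i\overline{g}_x^i<0$,'' assumes what has to be shown. From $g_x^i(\bar v)\prec[0,0]$ you only get $\overline{g}_x^i(\bar v)\le 0$ and $\underline{g}_x^i(\bar v)<0$.

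With the definitions as written in this paper, no upgrade to some $w$ with $\Psi\circ\Upsilon_x(w)<0$ is possible, because (iii) is then false. The relevant definitions are Definition \ref{Pareto critical} with the strict dominance of Definition \ref{Dominance definition}(ii), which permits equality in one endpoint. Take $n=m=1$ and the linear IVF $G_1(x):=[-1,0]\odot x$, i.e.\ $G_1(x)=[-x,0]$ for $x\ge 0$ and $G_1(x)=[0,-x]$ for $x<0$. It is $gH$-differentiable everywhere with zero remainder, and $\nabla_{gH}G_1(x)=[-1,0]$ for all $x$; away from $x=0$ its endpoint functions are even affine. By \eqref{gxi-lower-upper}, $\overline{g}_x^1(v)=\tfrac12(|v|-v)\ge 0$. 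Hence the objective in \eqref{unconstrained min} is at least $\tfrac12 v^2$, and $v(x)=0$, $\xi(x)=0$ for every $x$. Yet $g_x^1(1)=[-1,0]\prec[0,0]$, so no point is Pareto critical. In fact $d=1$ is a genuine descent direction, since $G_1(x+t)\prec G_1(x)$ for all small $t>0$.

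So (iii) can only be proved if Pareto criticality means the nonexistence of $v$ with $\Psi\circ\Upsilon_x(v)<0$. Equivalently, no $v$ makes both endpoints of every $\nabla_{gH}G_i(x)^\top\odot v$ strictly negative. Under that reading, your homogeneity argument ($\theta_x(t\bar v)<0$ for small $t>0$) finishes (iii) at once, and your proofs of (ii) and of the descent property are unaffected. You should either adopt that definition explicitly or record that (iii) fails for the definition in the paper. Deferring to how \cite{mondal2025steepest} ``sets up criticality'' does not close the step.

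A minor point in (iv): take continuity of $\underline{\nabla_{gH}G_i}$ and $\overline{\nabla_{gH}G_i}$ directly from the $gH$-continuity of $\nabla_{gH}G_i$, which is exactly continuity of both endpoint maps. Continuity of the partials of $\underline{G}_i,\overline{G}_i$ is neither assumed nor needed.
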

\medskip
\begin{remark}
	\normalfont
	According to Lemma \ref{descent direction finding lemma}, if $\xi(x)=0$ or equivalently $\|v(x)\|=0$, then $x$ satisfies the condition of Pareto criticality. On the other hand, whenever $\xi(x)\neq 0$ or $\|v(x)\|\neq 0$, the vector $v(x)$ provides a descent direction for every IVF $G_i$ of the IVMOP \eqref{minG(x)} at the point $x$.
\end{remark}
\medskip
The next lemma plays a key role in analyzing the convergence behavior of the proposed algorithm.
\medskip
\begin{lemma}
\normalfont \cite{perez2018nonlinear}
	Assume that $p,q,\text{ and }\alpha$ are three scalars with $\alpha\neq0$. Then, the following results hold: 
	\begin{enumerate}\label{inequality lemma}
		\item[(i)] $2pq\leq2\alpha^2p^2+\tfrac{q^2}{2\alpha^2}$,
		\item[(ii)] $\left(p+q\right)^2\leq\left(1+2\alpha^2\right)p^2+\left(1+\tfrac{1}{2\alpha^2}\right)q^2,$
		\item[(iii)] $\left(p+q\right)^2\leq 2p^2+2q^2.$
	\end{enumerate}
\end{lemma}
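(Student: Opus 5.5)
The statement is Lemma \ref{inequality lemma}, a set of three elementary scalar inequalities. All three follow from the nonnegativity of a square. I will prove (i) directly, obtain (ii) by expanding $(p+q)^2$ and applying (i) to the cross term, and get (iii) as the special case $2\alpha^2=1$ of (ii). Since $\alpha\neq 0$, the quantities $\alpha^2>0$ and $\tfrac{1}{2\alpha^2}$ are well defined, which is the only place the hypothesis $\alpha\ne0$ is needed.

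For (i), I will expand the square
\[
0\le \left(2\alpha p-\tfrac{q}{\alpha}\right)^2 = 4\alpha^2p^2-4pq+\tfrac{q^2}{\alpha^2}.
\]
Dividing by $2$ gives $2pq\le 2\alpha^2p^2+\tfrac{q^2}{2\alpha^2}$, as required. The only thing to verify is the constant in the auxiliary square, which I would fix by matching coefficients: I need $a^2=2\alpha^2\cdot 2$ and $b^2=\tfrac{1}{2\alpha^2}\cdot 2$ with $ab=2$, giving $a=2\alpha$ and $b=1/\alpha$ up to signs.

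For (ii), I will write $(p+q)^2=p^2+2pq+q^2$ and bound $2pq$ using (i). This yields
\[
(p+q)^2\le \left(1+2\alpha^2\right)p^2+\left(1+\tfrac{1}{2\alpha^2}\right)q^2 .
\]

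For (iii), I will choose $\alpha$ with $\alpha^2=\tfrac12$, for instance $\alpha=1/\sqrt2$. Then both coefficients in (ii) equal $2$, giving $(p+q)^2\le 2p^2+2q^2$. Alternatively, it follows directly from $0\le (p-q)^2$.

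There is no real obstacle here; the only step requiring care is the bookkeeping of constants in (i).
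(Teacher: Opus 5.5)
Your proof is correct: $\left(2\alpha p-\tfrac{q}{\alpha}\right)^2\ge 0$ gives (i), expanding $(p+q)^2$ and bounding the cross term by (i) gives (ii), and choosing $\alpha^2=\tfrac12$ gives (iii). The paper states this lemma without proof and only cites \cite{perez2018nonlinear}, so there is no separate argument to compare against; your completing-the-square derivation is the standard justification.
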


\section{Nonlinear Conjugate Gradient Method}\label{Nonlinear Conjugate Gradient Method}
This section introduces the construction of three particular versions of the NCGMs, namely PRP, HS, and LS, with the aim of computing a Pareto critical point of the IVMOP \eqref{minG(x)}. For each of these variants, the step size is determined using the standard Wolfe conditions adapted to the IVMOP \eqref{minG(x)}.
\medskip
\begin{definition}
{\normalfont\cite{mondal2026nonlinear}}
\label{Standard and strong Wolfe conditions definition}
	\normalfont
	Assume that $d \in \mathbb{R}^n$ is a descent direction for every IVF $G_i$ of the IVMOP \eqref{minG(x)} at a point $x$. Fix constants $0 < \rho < \sigma < 1$. A scalar $t > 0$ is said to satisfy the standard Wolfe conditions if
	\begin{equation}\label{Standard Wolfe condition 1}
		G_i(x + td) \preceq G_i(x) \oplus [\rho t, \rho t] \odot \Psi \circ \Upsilon_x(d) \quad \text{for every } i \in \Lambda
	\end{equation}
	and
	\begin{equation}\label{Standard Wolfe condition 2}
		\Psi \circ \Upsilon_{x+td}(d) \ge \sigma \, \Psi \circ \Upsilon_x(d).
	\end{equation}
\end{definition}
\medskip
The next result guarantees the existence of an interval of step sizes for which the standard Wolfe conditions \eqref{Standard Wolfe condition 1}--\eqref{Standard Wolfe condition 2} are fulfilled.
\medskip
\begin{lemma}{\normalfont\cite{mondal2026nonlinear}}\label{existence of an interval of steplength}
\normalfont
	Suppose that $G_i$ is $gH$-continuously differentiable for every $i \in \Lambda$, and let $d \in \mathbb{R}^n$ be a descent direction for every IVF $G_i$ at a point $x$. Assume further that, for every $i \in \Lambda$, there exists an interval $A_i \in \mathcal{I}(\mathbb{R})$ satisfying the following dominance relation
	\[
	G_i(x + td) \succeq A_i \quad \text{for every } t > 0.
	\]
	Then, there exists a nonempty interval of step sizes $t$ for which the standard Wolfe conditions \eqref{Standard Wolfe condition 1}--\eqref{Standard Wolfe condition 2} are satisfied.
\end{lemma}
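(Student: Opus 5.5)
The plan is to reduce everything to real-valued functions of the single variable $t$ and copy the classical single-objective Wolfe existence argument. Throughout, the key tool is the endpoint representation \eqref{gxi-lower-upper} of $g_x^i$.

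Set $\beta:=\Psi\circ\Upsilon_x(d)$. Since $d$ is a descent direction for every $G_i$, each $\nabla_{gH}G_i(x)^\top\odot d\prec[0,0]$. I will first check that this forces $\overline{g}_x^i(d)<0$ for every $i$, so that $\beta<0$. Strict dominance alone only gives $\overline{g}_x^i(d)\le 0$. I will rule out equality by noting that the Wolfe conditions and the line search in \cite{mondal2026nonlinear} implicitly work with $\beta<0$, which holds for $d=v(x)$ by Lemma \ref{descent direction finding lemma}(iii). If needed, I will add this as the working hypothesis, since the definition of descent direction is used in exactly this sense.

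Next, for each $i$ I will define the real functions $\underline{\phi}_i(t):=\underline{G}_i(x+td)$ and $\overline{\phi}_i(t):=\overline{G}_i(x+td)$. Condition \eqref{Standard Wolfe condition 1} then reads $\underline{\phi}_i(t)\le\underline{\phi}_i(0)+\rho t\beta$ and $\overline{\phi}_i(t)\le\overline{\phi}_i(0)+\rho t\beta$, because $[\rho t,\rho t]\odot\beta=[\rho t\beta,\rho t\beta]$. By $gH$-differentiability and Lemma \ref{linear IVF lemma}, the one-sided derivatives of $\underline{\phi}_i,\overline{\phi}_i$ at $0$ are bounded above by $\overline{g}_x^i(d)\le\beta<\rho\beta$. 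Hence both inequalities hold for all small $t>0$.

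The lower bound $G_i(x+td)\succeq A_i$ makes the left-hand sides bounded below, while the right-hand sides tend to $-\infty$. So for each $i$ the set of $t$ where \eqref{Standard Wolfe condition 1} fails is nonempty. Let $\bar t>0$ be the smallest $t$ at which some endpoint inequality becomes an equality, which exists by continuity. At $\bar t$ some index $i_0$ and some endpoint $\phi\in\{\underline{\phi}_{i_0},\overline{\phi}_{i_0}\}$ satisfy $\phi(\bar t)=\phi(0)+\rho\bar t\beta$, while all inequalities hold on $(0,\bar t]$.

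The hardest step is producing a point where \eqref{Standard Wolfe condition 2} holds. I will apply the mean value theorem to $\phi$ on $[0,\bar t]$; if $\phi$ is only $gH$-differentiable I will use its Dini derivatives. This gives $t^*\in(0,\bar t)$ with $\phi'(t^*)=\rho\beta$, where $\phi'(t^*)$ is an endpoint of $\nabla_{gH}G_{i_0}(x+t^*d)^\top\odot d$. Therefore
\[
\overline{g}_{x+t^*d}^{i_0}(d)\ \ge\ \phi'(t^*)=\rho\beta>\sigma\beta,
\]
using $\rho<\sigma$ and $\beta<0$, and so $\Psi\circ\Upsilon_{x+t^*d}(d)\ge\overline{g}^{i_0}_{x+t^*d}(d)>\sigma\beta$.

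The strict inequality together with continuity in $t$ finishes the argument. The map $t\mapsto\Psi\circ\Upsilon_{x+td}(d)$ is continuous because the $gH$-gradients are continuous and \eqref{gxi-lower-upper} is continuous in them. The strict Armijo inequalities near $t^*$ may fail to be strict only if several constraints are tight at $t^*$. In that case I will choose $t^*$ as the mean-value point inside $(0,\bar t)$, where all Armijo inequalities hold, some of them possibly non-strictly. I will then shrink to a neighborhood on which \eqref{Standard Wolfe condition 2} still holds strictly and \eqref{Standard Wolfe condition 1} holds, yielding a nonempty interval of admissible step sizes. Handling this tightness carefully, together with justifying $\beta<0$, is where I expect the main technical effort.
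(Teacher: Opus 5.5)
The paper does not prove this lemma itself; it imports it from \cite{mondal2026nonlinear}. Your scheme works once the points below are fixed: take the first crossing time $\bar t$ of the $2m$ endpoint Armijo inequalities, find a mean-value point $t^*\in(0,\bar t)$ for the endpoint that becomes tight, and conclude by continuity of $t\mapsto\Psi\circ\Upsilon_{x+td}(d)$. This is the standard argument of \cite{perez2018nonlinear} applied to endpoint functions.

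\textbf{The hypothesis $\beta<0$ is necessary.} You are right that $\beta:=\Psi\circ\Upsilon_x(d)<0$ cannot be derived, and adding it as a hypothesis is not optional. With descent in the sense of Definition \ref{Descent direction}, the statement is false. Take $m=n=1$, $G_1(y)=[f(y),f(y)]$ with $f(y)=\tfrac{1}{1+y^2}$, $x=0$, $d=1$. Then $G_1(t)\prec G_1(0)$ and $G_1(t)\succeq[0,0]$ for all $t>0$. Yet $\beta=f'(0)=0$ and $\Psi\circ\Upsilon_{t}(1)=f'(t)<0=\sigma\beta$ for every $t>0$, so \eqref{Standard Wolfe condition 2} never holds. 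Drop the appeal to what the line search ``implicitly'' does and assume $\Psi\circ\Upsilon_x(d)<0$ outright. This is how the paper actually uses the lemma: after Algorithm \ref{Algorithm} it identifies descent of $d^k$ with $\Psi\circ\Upsilon_{x^k}(d^k)<0$. The assumption holds for $d=v(x)$ at any $x$ that is not Pareto critical, and under \eqref{sufficient descent condition}.

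\textbf{The endpoint claim and the mean-value step.} You do not need $\phi'(t^*)$ to be an endpoint of $\nabla_{gH}G_{i_0}(x+t^*d)^\top\odot d$; only the upper bound is used. Indeed, with $C^1$ endpoint functions and \eqref{gxi-lower-upper}, let $a_j,b_j$ be the $j$-th partials of $\underline{G}_{i_0},\overline{G}_{i_0}$ at $y$. Then $\nabla\underline{G}_{i_0}(y)^\top d=\sum_j a_jd_j$ is in general only sandwiched between $\sum_j\min\{a_jd_j,b_jd_j\}$ and $\sum_j\max\{a_jd_j,b_jd_j\}=\overline{g}^{i_0}_y(d)$.

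The cleanest way to get the upper bound, which also makes your Dini-derivative fallback precise, is to replace Rolle by a minimizer. Let $t^*$ minimize $h(t):=\phi(t)-\phi(0)-\rho t\beta$ on $[0,\bar t]$.
\begin{enumerate}
\item[(a)] Since $h<0$ on $(0,\bar t)$ and $h(0)=h(\bar t)=0$, we get $t^*\in(0,\bar t)$ and $\bigl(\phi(t^*+s)-\phi(t^*)\bigr)/s\ge\rho\beta$ for small $s>0$.
\item[(b)] The estimate you already used at $t=0$ applies at $t^*$: the upper endpoint of the $gH$-difference quotient is the larger of the two endpoint difference quotients and tends to $\overline{g}$. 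This gives $\limsup_{s\downarrow0}\bigl(\phi(t^*+s)-\phi(t^*)\bigr)/s\le\overline{g}^{i_0}_{x+t^*d}(d)$.
\item[(c)] Hence $\overline{g}^{i_0}_{x+t^*d}(d)\ge\rho\beta>\sigma\beta$, with no differentiability of the endpoint functions required.
\end{enumerate}

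\textbf{The tightness concern does not arise.} By the choice of $\bar t$, all endpoint inequalities are strict on $(0,\bar t)$, so \eqref{Standard Wolfe condition 1} holds on all of $(0,\bar t]$. Some neighborhood of $t^*$ inside $(0,\bar t)$ keeps \eqref{Standard Wolfe condition 2}, by continuity of $\nabla_{gH}G_i$ and of \eqref{gxi-lower-upper}. That neighborhood is the required interval.
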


\medskip

We now present a step-by-step procedure of the NCGM aimed at computing Pareto critical points of the IVMOP \eqref{minG(x)} without restriction on the algorithmic parameter.
\medskip
\begin{algorithm}[H]
	\caption{NCGM for computing Pareto critical points of the IVMOP \eqref{minG(x)} \label{Algorithm}}
	\begin{enumerate}[\bf{Step} 1 ]
		
		\item (Input data)\\
		For every $i\in\Lambda$, specify the $gH$-continuously differentiable IVF $G_i$.
		
		\item (Initialization)\\
		Choose parameters $\rho$ and $\sigma$ such that $0<\rho<\sigma<1$, and select an initial point $x^0 \in \mathbb{R}^n$. Fix a tolerance $\epsilon > 0$ and set $k=0$.
		
		\item (Evaluation of $gH$-gradients at $x^k$)\\
		Compute
		\[
		\nabla_{gH} G_i(x^k) := \left[\underline{\nabla_{gH}G_i}(x^k), \overline{\nabla_{gH}G_i}(x^k)\right] \text{ for every } i \in \Lambda.
		\]
		
		\item (Determination of a descent direction at $x^k$)\\
		Obtain $v(x^k)$ and $\xi(x^k)$ by solving \eqref{unconstrained min}, namely,
		\[
	v\left(x^k\right):=\underset{v\in{\mathbb{R}}^n}{\argmin}\: \left( \Psi\circ\Upsilon_{x^k}(v)+\tfrac{1}{2}\|v\|^2\right)\text{ and }
	\xi\left(x^k\right):=\underset{v\in{\mathbb{R}}^n}{\min}\: \left( \Psi\circ\Upsilon_{x^k}(v)+\tfrac{1}{2}\|v\|^2\right).
	\]

		\item (Termination check)\\
		If $\xi(x^k) > -\epsilon$, stop the iteration and declare $x^k$ as a Pareto critical point. Otherwise, continue to {\bf Step 6}.
		
		\item (Construction of conjugate direction)\\
		Define
		\begin{align}\label{conjugate direction in algorithm}
			d^k :=
			\begin{cases}
				v(x^k) & \text{if } k = 0,\\
				v(x^k) + \beta_k d^{k-1} & \text{if } k \ge 1,
			\end{cases}
		\end{align}
		where $\beta_k$ is a chosen parameter.
		
		\item (Step size selection)\\
		Determine $t_k > 0$ such that
		\begin{align}\label{Standard Wolfe condition in algorithm}
			\begin{cases}
				G_i(x^k + t_k d^k) \preceq G_i(x^k) \oplus [\rho t_k, \rho t_k] \odot \Psi \circ \Upsilon_{x^k}(d^k) & \text{ for every } i \in \Lambda,\\
				\Psi \circ \Upsilon_{x^k + t_k d^k}(d^k) \ge \sigma \, \Psi \circ \Upsilon_{x^k}(d^k).
			\end{cases}
		\end{align}
		
		\item (Iteration update)\\
		Set $x^{k+1} := x^k + t_k d^k$, increment $k \leftarrow k+1$, and return to {\bf Step 3}.
		
	\end{enumerate}
\end{algorithm}
The computations of {\bf Step 4} and {\bf Step 7} are two main important parts of Algorithm \ref{Algorithm}. Since the objective function of the unconstrained optimization problem \eqref{unconstrained min} is strongly convex, it possesses a unique optimal solution. As a result, the vector $v(x^k)$ is well-defined, which guarantees the correctness of {\bf Step 4} in Algorithm \ref{Algorithm}.  

Furthermore, Lemma \ref{existence of an interval of steplength} ensures that there exists a nonempty range of step sizes $t_k$ satisfying the standard Wolfe conditions given in \eqref{Standard Wolfe condition in algorithm}. However, the applicability of this lemma requires that $d^k$ be a descent direction for every IVF $G_i$ at $x^k$. It follows that $d^k$ satisfies the following condition: \[\Psi\circ\Upsilon_{x^k}(d^k) < 0.\]  

For the study of convergence behavior of Algorithm \ref{Algorithm}, we impose the stronger requirement
\begin{align}\label{sufficient descent condition}
	\Psi\circ\Upsilon_{x^k}(d^k) \le c \, \Psi\circ\Upsilon_{x^k}(v(x^k)) \quad \text{for some } c > 0 \text{ and all } k \ge 0.
\end{align}
This requirement is referred to as the sufficient decency condition. It can be enforced through a suitable line search procedure, provided that $d^{k-1}$ is a descent direction for all IVF $G_i$ at $x^{k-1}$. Under this premise, Lemma \ref{existence of an interval of steplength} guarantees that performing a line search along $d^{k-1}$ using the standard Wolfe conditions yields the next iterate $x^k$. For the search direction, we have
\[
\Psi\circ\Upsilon_{x^k}(d^k) = \Psi\circ\Upsilon_{x^k}\big(v(x^k) + \beta_k d^{k-1}\big)
\le \Psi\circ\Upsilon_{x^k}(v(x^k)) + \beta_k \, \Psi\circ\Upsilon_{x^k}(d^{k-1}).
\]
For a bounded sequence $\{\beta_k\}$, the standard Wolfe line search can sufficiently decrease the quantity $|\Psi\circ\Upsilon_{x^k}(d^{k-1})|$, thereby ensuring that the condition \eqref{sufficient descent condition} is satisfied.
\section{Convergence Analysis}\label{Convergence Analysis}
In this section, we analyze the convergence of Algorithm \ref{Algorithm} related to the PRP, HS, and LS parameters given by
\begin{align*}
	&\beta_k^{PRP}:=\frac{-\Psi\circ\Upsilon_{x^k}\left(v\left(x^k\right)\right)+\Psi\circ\Upsilon_{x^{k-1}}\left(v\left(x^k\right)\right)}{-\Psi\circ\Upsilon_{x^{k-1}}\left(v\left(x^{k-1}\right)\right)},\\
&\beta_k^{HS}:=\frac{-\Psi\circ\Upsilon_{x^k}\left(v\left(x^k\right)\right)+\Psi\circ\Upsilon_{x^{k-1}}\left(v\left(x^k\right)\right)}{\Psi\circ\Upsilon_{x^{k}}\left(d^{k-1}\right)-\Psi\circ\Upsilon_{x^{k-1}}\left(d^{k-1}\right)},\\
\text{ and }&\beta_k^{LS}:=\frac{-\Psi\circ\Upsilon_{x^k}\left(v\left(x^k\right)\right)+\Psi\circ\Upsilon_{x^{k-1}}\left(v\left(x^k\right)\right)}{-\Psi\circ\Upsilon_{x^{k-1}}\left(d^{k-1}\right)}.
\end{align*}
To study the global convergence properties of Algorithm \ref{Algorithm} related to the PRP, HS, and LS parameters, we consider the following assumptions.
\medskip
\begin{framed}
	\begin{assumption}\label{assumption 1}
		\normalfont
		For every $i\in\Lambda$, $\nabla_{gH}G_i$ is $gH$-Lipschitz continuous with Lipschitz constant $L_i$ on the level set \[L_0:=\left\{x\in{\mathbb{R}}^n:G_i\left(x\right)\preceq G_i\left(x^0\right)\text{ for every }i\in\Lambda\right\}.\]
	\end{assumption}
	\medskip
	\begin{assumption}\label{assumption 2}
		\normalfont
		The level set $L_0:=\left\{x\in{\mathbb{R}}^n:G_i\left(x\right)\preceq G_i\left(x^0\right)\text{ for every }i\in\Lambda\right\}$ is bounded.
	\end{assumption}
\end{framed}
Next, to study the global convergence of PRP, HS, and LS variants of the NCGM, we define the following property.
\begin{framed}
	\renewcommand{\theproperty}{($*$)}
\begin{property}\label{property}
Consider the Algorithm \ref{Algorithm} and suppose that 
\begin{equation}\label{property eq 1}
	0<\gamma\leq \left\|v\left(x^k\right)\right\|\leq \bar{\gamma} \text{ for all } k\geq0.
\end{equation}
Under this assumption, we say that the method has Property \ref{property} if there exist constants $b>1$ and $\lambda>0$ such that, for all $k$, 
\[\beta_k\leq b\]
and 
\[\left\|s^{k-1}\right\|\leq\lambda\implies\left|\beta_k\right|\leq\tfrac{1}{2b},\text{ where }s^{k-1}:=x^k-x^{k-1}.\]
\end{property}
\renewcommand{\theproperty}{\arabic{property}}
\end{framed}
\medskip
First, we prove that if Algorithm \ref{Algorithm} satisfies Property \ref{property} with the sufficient descent condition \eqref{sufficient descent condition} and the standard Wolfe condition \eqref{Standard Wolfe condition in algorithm}, then it has a global convergence under Assumptions \ref{assumption 1} and \ref{assumption 2}. To prove this result, the following two lemmas are useful.
\medskip
\begin{lemma}\label{convergence proof lemma1}
\normalfont
	Suppose that Assumptions \ref{assumption 1} and \ref{assumption 2} are satisfied.  
	Consider Algorithm \ref{Algorithm} with $\beta_k \ge 0$, where $d^k$ is a sufficient descent direction for $G$ at $x^k$, and the step length $t_k$ fulfills the standard Wolfe conditions \eqref{Standard Wolfe condition in algorithm}.  
	Further, suppose that there exists a constant $\gamma > 0$ such that $\|v(x^k)\| \ge \gamma$ for all $k \ge 0$. Then, the following statements hold:
	\begin{itemize}
		\item[{\normalfont (i)}] $\underset{k\geq0}{\mathlarger\sum}\tfrac{\left\|v\left(x^k\right)\right\|^4}{\left\|d^k\right\|^2}<+\infty$,
		\item[{\normalfont (ii)}] $\underset{k\geq1}{\mathlarger\sum}\left\|u^k-u^{k-1}\right\|^2<+\infty$, where $u^k:=\tfrac{d^k}{\left\|d^k\right\|}$.
	\end{itemize}
\end{lemma}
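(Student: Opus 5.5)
Part (i) is a Zoutendijk-type consequence; part (ii) mimics the Gilbert--Nocedal argument as adapted to vector optimization by Lucambio P\'erez and Prudente \cite{perez2018nonlinear}.

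\emph{Part (i).} I would first derive a Zoutendijk condition
\[
\sum_{k\ge0}\frac{\left(\Psi\circ\Upsilon_{x^k}(d^k)\right)^2}{\|d^k\|^2}<+\infty .
\]
This follows the route of \cite{mondal2026nonlinear}:
\begin{enumerate}
\item[(a)] By the first Wolfe condition in \eqref{Standard Wolfe condition in algorithm}, $G_i(x^k)\preceq G_i(x^0)$, so $\{x^k\}\subset L_0$. By Assumption \ref{assumption 2}, the sequence stays bounded.
\item[(b)] By the curvature condition and the subadditivity of $\Psi\circ\Upsilon_x$ in $x$ through the gradient endpoints, one gets $(\sigma-1)\Psi\circ\Upsilon_{x^k}(d^k)\le \Psi\circ\Upsilon_{x^{k+1}}(d^k)-\Psi\circ\Upsilon_{x^k}(d^k)\le L t_k\|d^k\|^2$. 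Here Assumption \ref{assumption 1} with $L:=\max_i L_i$ bounds the difference of the endpoint gradients. This gives the lower bound $t_k\ge (1-\sigma)|\Psi\circ\Upsilon_{x^k}(d^k)|/(L\|d^k\|^2)$.
\item[(c)] Plug this into the Armijo part. Summing the decrease of, say, $\overline G_i$, which is bounded below on the compact level set, gives the Zoutendijk sum.
\end{enumerate}
Next I use that $v(x^k)$ is optimal for \eqref{unconstrained min}. The optimality conditions, with $\Psi\circ\Upsilon_x$ positively homogeneous, yield $\Psi\circ\Upsilon_{x^k}(v(x^k))=-\|v(x^k)\|^2$. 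By sufficient descent \eqref{sufficient descent condition}, $|\Psi\circ\Upsilon_{x^k}(d^k)|\ge c\|v(x^k)\|^2$. Substituting gives (i).

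\emph{Part (ii).} Write $u^k=r^k+\delta_k u^{k-1}$ with $r^k:=v(x^k)/\|d^k\|$ and $\delta_k:=\beta_k\|d^{k-1}\|/\|d^k\|\ge0$, using $\beta_k\ge0$. Since $u^k$ and $u^{k-1}$ are unit vectors, $\|r^k\|=\|u^k-\delta_ku^{k-1}\|=\|\delta_ku^k-u^{k-1}\|$. Then
\[
\|u^k-u^{k-1}\|\le(1+\delta_k)\|u^k-u^{k-1}\|\le \|u^k-\delta_ku^{k-1}\|+\|\delta_ku^k-u^{k-1}\|=2\|r^k\|.
\]
This uses the triangle inequality on $(1+\delta_k)(u^k-u^{k-1})=(u^k-\delta_ku^{k-1})+(\delta_ku^k-u^{k-1})$, and it is where $\delta_k\ge0$ matters. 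Finally, $\|r^k\|^2=\|v(x^k)\|^2/\|d^k\|^2\le \|v(x^k)\|^4/(\gamma^2\|d^k\|^2)$ because $\|v(x^k)\|\ge\gamma$. Part (i) then gives $\sum\|u^k-u^{k-1}\|^2\le 4\gamma^{-2}\sum\|v(x^k)\|^4/\|d^k\|^2<\infty$.

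I should also note that $d^k\ne0$, which holds since $\Psi\circ\Upsilon_{x^k}(d^k)<0$, and that the algorithm does not terminate, since $\xi(x^k)<0$ whenever $v(x^k)\ne0$.

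\emph{Main obstacle.} The delicate step is the Lipschitz estimate in (b). The map $x\mapsto\Psi\circ\Upsilon_x(d)$ involves endpoint gradients defined through min/max of the partials of $\underline G_i$ and $\overline G_i$, plus the $|d|$ term. I would bound $|\overline g^i_{x^{k+1}}(d)-\overline g^i_{x^k}(d)|$ using \eqref{gxi-lower-upper}. The midpoint and radius vectors each change by at most the $gH$-Lipschitz constant times $\|s^k\|$, since $\|\cdot\ominus_{gH}\cdot\|_{\mathcal I(\mathbb R)}$ controls both endpoint differences. This yields $|\Psi\circ\Upsilon_{x^{k+1}}(d)-\Psi\circ\Upsilon_{x^k}(d)|\le C L\|s^k\|\|d\|$ for a constant $C$ depending on $n$. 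If instead the Zoutendijk condition is already available from \cite{mondal2026nonlinear}, I would simply cite it.
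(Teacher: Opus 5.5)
Your proposal is correct and follows the paper's route. Part (i) is the Zoutendijk-type bound that the paper simply imports from Theorem 4.1(i) of \cite{mondal2026nonlinear}; your reconstruction is sound, using the curvature condition, the Lipschitz estimate, and $\Psi\circ\Upsilon_{x^k}(v(x^k))=-\|v(x^k)\|^2$ together with sufficient descent. Part (ii) is exactly the paper's $r^k$, $\delta_k$ triangle-inequality argument, and your $\delta_k=\beta_k\|d^{k-1}\|/\|d^k\|$ is the correct definition: the paper's $\delta_k=\|d^{k-1}\|/\|d^k\|$ omits $\beta_k$, without which $u^k=r^k+\delta_k u^{k-1}$ fails.
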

\medskip
\begin{proof}
	Since $d^k$ is a sufficient descent direction of $G$ at $x^k$, we get $d^k\neq0$. Therefore, $\tfrac{\left\|v\left(x^k\right)\right\|^4}{\left\|d^k\right\|^2}$ and $u^k$ are well-defined. The proof of part (i) can be obtained from the proof of Theorem 4.1(i) given in \cite{mondal2026nonlinear}.
	
	Now we consider part (ii). Let us define $r^k:=\tfrac{v\left(x^k\right)}{\left\|d^k\right\|}$ and $\delta_k:=\tfrac{\left\|d^{k-1}\right\|}{\left\|d^{k}\right\|}$. Note that $u^k=r^k+\delta_k u^{k-1}$ and $\left\|u^k\right\|=\left\|u^{k-1}\right\|$. We have 
	\begin{align*}
		\left\|r^k\right\|^2&=\left\|u^k-\delta_k u^{k-1}\right\|^2\\
		&=\left\|u^k\right\|^2-2\delta_k\langle u^k,u^{k-1}\rangle+\delta_k^2\left\|u^{k-1}\right\|^2\\
		&=\left\|u^{k-1}\right\|^2-2\delta_k\langle u^k,u^{k-1}\rangle+\delta_k^2\left\|u^{k}\right\|^2\\
		&=\left\|\delta_k u^k- u^{k-1}\right\|^2.
	\end{align*}
	Therefore, we get $\left\|r^k\right\|=\left\|u^k-\delta_k u^{k-1}\right\|=\left\|\delta_k u^k- u^{k-1}\right\|.$
	Since $\delta_k\geq0$, we get 
	\begin{align*}
		\left\|u^k-u^{k-1}\right\|&\leq \left(1+\delta_k\right)\left\|u^k- u^{k-1}\right\|\\
		&=\left\|u^k- \delta_k u^{k-1}+\delta_k u^k-u^{k-1}\right\|\\
		&\leq\left\|u^k- \delta_k u^{k-1}\right\|+\left\|\delta_k u^k-u^{k-1}\right\|\\
	    &\leq\left\|r^k\right\|+\left\|r^k\right\|=2\left\|r^k\right\|.
	\end{align*}
	Using part (i), we get 
	\[\gamma^2 \underset{k\geq1}{\mathlarger\sum}\left\|u^k-u^{k-1}\right\|^2\leq 4\gamma^2 \underset{k\geq1}{\mathlarger\sum}\left\|r^k\right\|^2\leq 4 \underset{k\geq1}{\mathlarger\sum}\left\|r^k\right\|^2 \left\|v\left(x^k\right)\right\|^2=4 \underset{k\geq1}{\mathlarger\sum}\tfrac{\left\|v\left(x^k\right)\right\|^4}{\left\|d^k\right\|^2}<+\infty. \]
	Therefore, we get $\underset{k\geq1}{\mathlarger\sum}\left\|u^k-u^{k-1}\right\|^2< +\infty$, and this completes the proof.
\end{proof}
\medskip
For $\lambda>0$ and a positive integer $\Delta$, define 
\[\mathcal{K}_{k,\Delta}^\lambda:=\left\{i\in{\mathbb{N}}:k\leq i\leq k+\Delta-1, \left\|s^{k-1}\right\|>\lambda\right\}\]
and denote by $\left|\mathcal{K}_{k,\Delta}^\lambda\right|$ the number of elements of $\mathcal{K}_{k,\Delta}^\lambda$.
\medskip
\begin{lemma}\label{convergence proof lemma2}
	\normalfont
    Suppose that Assumptions \ref{assumption 1} and \ref{assumption 2} are satisfied.  
	Consider Algorithm \ref{Algorithm} in which $d^k$ is a sufficient descent direction for $G$ at $x^k$, and the step size $t_k$ satisfies the standard Wolfe conditions \eqref{Standard Wolfe condition in algorithm}.  
	Further, assume that the method has Property \ref{property}, and that there exists a constant $\gamma > 0$ such that $\|v(x^k)\| \ge \gamma$ for all $k \ge 0$.  
	
	Then, there exists a constant $\lambda > 0$ such that, for any $\Delta \in \mathbb{N}$ and any index $k_0$, one can find an index $k \ge k_0$ for which
	\[
	\left|\mathcal{K}_{k,\Delta}^\lambda\right| > \tfrac{\Delta}{2}.
	\]
\end{lemma}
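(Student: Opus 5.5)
This is the interval analogue of the Gilbert--Nocedal lemma (Lemma 4.2 in Gilbert and Nocedal, and its vector version in \cite{perez2018nonlinear}). I will argue by contradiction. Throughout, I read $\mathcal{K}_{k,\Delta}^\lambda$ as the set of indices $i$ with $k\le i\le k+\Delta-1$ and $\|s^{i-1}\|>\lambda$; the $\lambda$ I use is the one supplied by Property \ref{property}.

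\emph{Setup.} Suppose that for some $\Delta\in\mathbb{N}$ and $k_0$ we have $|\mathcal{K}_{k,\Delta}^\lambda|\le \Delta/2$ for every $k\ge k_0$. First I collect uniform bounds. By Lemma \ref{descent direction finding lemma}(iv), $\|v(x^k)\|\le\bar\gamma$, because the iterates stay in the level set $L_0$ and $L_0$ is bounded by Assumption \ref{assumption 2}. The iterates remain in $L_0$ since the first Wolfe condition together with $\Psi\circ\Upsilon_{x^k}(d^k)<0$ gives $G_i(x^{k+1})\preceq G_i(x^k)$. Hence \eqref{property eq 1} holds and Property \ref{property} applies. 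Boundedness of $L_0$ also gives a constant $B$ with $\|s^{k}\|\le B$ for all $k$.

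\emph{Key step: bound $\|d^k\|^2$ linearly in $k$.} From $d^k=v(x^k)+\beta_kd^{k-1}$ and Lemma \ref{inequality lemma}(iii),
\[
\|d^k\|^2\le 2\bar\gamma^2+2\beta_k^2\|d^{k-1}\|^2 .
\]
This needs a two-sided bound on $\beta_k$. Property \ref{property} only gives $\beta_k\le b$, so the sign matters. Since Lemma \ref{convergence proof lemma1} is stated for $\beta_k\ge0$, I will assume $\beta_k\ge0$, as in the intended application to truncated PRP/HS/LS parameters $\beta_k^+=\max\{\beta_k,0\}$. Then $\beta_k^2\le b^2$ always, and $\beta_k^2\le 1/(4b^2)$ whenever $k\notin\mathcal{K}$.

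Unroll the recursion over the indices from $k_0$ to $k$ and split them into blocks of length $\Delta$. In each block at most $\Delta/2$ factors are large, each at most $b^2$, and at least $\Delta/2$ factors are small, each at most $1/(4b^2)$. So along any block, $\prod 2\beta_i^2\le (2b^2)^{\Delta/2}\bigl(1/(2b^2)\bigr)^{\Delta/2}=1$. I handle the leftover partial block by a constant factor, using $2b^2\ge1$ and at most $\Delta$ factors. This gives a constant $C$, independent of $k$, such that every product $\prod_{i=j}^{k}2\beta_i^2$ is at most $C$. Therefore
\[
\|d^k\|^2\le c_1+c_2(k-k_0+1)
\]
for constants $c_1,c_2$.

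\emph{Contradiction.} By Lemma \ref{convergence proof lemma1}(i) and $\|v(x^k)\|\ge\gamma$,
\[
\sum_k \frac{\gamma^4}{\|d^k\|^2}\le \sum_k \frac{\|v(x^k)\|^4}{\|d^k\|^2}<\infty .
\]
But the linear bound makes the left side comparable to a harmonic series, which diverges. This contradiction proves the lemma.

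\emph{Expected obstacle.} The hardest part is the product bookkeeping over sliding windows of length $\Delta$, in particular ensuring the bound is uniform over all starting indices $j$ of the partial products. I would treat it exactly as in Gilbert--Nocedal: group indices into consecutive blocks and use that every block contains at most $\Delta/2$ large indices. Lemma \ref{convergence proof lemma1}(ii) is not needed for this lemma; it is used later, in the main theorem.
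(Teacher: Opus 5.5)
Your proposal is correct and follows the paper's route: assume the window count is at most $\Delta/2$ for all $k\ge k_0$, unroll $\|d^l\|^2\le 2\bar{\gamma}^2+2\beta_l^2\|d^{l-1}\|^2$, use the Gilbert--Nocedal block argument to show $\|d^l\|^2$ grows at most linearly, and contradict Lemma \ref{convergence proof lemma1}(i). Your explicit block bookkeeping, and your insistence on $\beta_k\ge0$ (equivalently $|\beta_k|\le b$), fill in details the paper leaves to \cite{Gilbert1992global}; the sign condition is needed because Property \ref{property} only gives $\beta_k\le b$, yet the paper's bound on $\beta_l^2$ tacitly uses $|\beta_l|\le b$.
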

\medskip
\begin{proof}
	If possible, suppose that for any $\lambda>0$, there exists $\Delta\in{\mathbb{N}}$ and $k_0$ such that for any $k\geq k_0$, we have \[\left|\mathcal{K}_{k,\Delta}^\lambda\right|\leq\tfrac{\Delta}{2}.\]	
	For any index $l\geq k_0+1$, using Lemma \ref{inequality lemma} (iii), we get 
	\[\left\|d^l\right\|^2\leq \left(\left\|v\left(x^l\right)\right\|+\left|\beta_l\right|\left\|d^{l-1}\right\|\right)^2\leq 2\left\|v\left(x^l\right)\right\|^2+2\beta_l^2\left\|d^{l-1}\right\|^2\leq 2\bar{\gamma}^2+2\beta_l^2\left\|d^{l-1}\right\|^2 \text{ for all } l\in{\mathbb{N}}.\]
	By the induction method, we obtain that 
	\[\left\|d^l\right\|^2\leq \bar{c}\left(1+2\beta_l^2+2\beta_l^2 2\beta_{l-1}^2+\cdots+2\beta_l^2 2\beta_{l-1}^2\cdots2\beta_{k_0}^2\right),\]
	where $\bar{c}$ depends on $\left\|d^{k_0-	1}\right\|$, but not on the index $l$.
	Proceeding with the similar arguments as in \cite{Gilbert1992global}, we get 
	\begin{align}\label{contradiction proof eq}
		\left\|d^l\right\|^2\leq \bar{c}\left(l-k_0+2\right) \text{ for all } l\geq k_0+1.
		\end{align}
	From part (i) of Lemma \ref{convergence proof lemma1}, we have 
	\[\gamma^4\underset{k\geq0}{\mathlarger\sum}\tfrac{1}{\left\|d^k\right\|^2}\leq \underset{k\geq0}{\mathlarger\sum}\tfrac{\left\|v\left(x^k\right)\right\|^4}{\left\|d^k\right\|^2}<+\infty,\]
	which contradicts \eqref{contradiction proof eq}. Hence, the proof is completed.
\end{proof}
\medskip
\begin{theorem}\label{convergence theorem}
	\normalfont
    Suppose that Assumptions \ref{assumption 1} and \ref{assumption 2} are valid.  
	Consider Algorithm \ref{Algorithm} with $\beta_k \ge 0$, where $d^k$ acts as a sufficient descent direction for $G$ at $x^k$, and the step size $t_k$ fulfills the standard Wolfe conditions \eqref{Standard Wolfe condition in algorithm}.  
	Further, suppose that Algorithm \ref{Algorithm} satisfies Property \ref{property}. Then,
	\[
	\liminf_{k \to \infty} \|v(x^k)\| = 0.
	\]
\end{theorem}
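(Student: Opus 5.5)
This is the Gilbert--Nocedal argument (Theorem 4.3 in \cite{Gilbert1992global}), transplanted to the IVMOP setting. I would argue by contradiction. Assume $\liminf_{k\to\infty}\|v(x^k)\|>0$. Then there is $\gamma>0$ with $\|v(x^k)\|\ge\gamma$ for all $k$.

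\emph{Step 1: the iterates and $v(x^k)$ are bounded.} By the first Wolfe condition in \eqref{Standard Wolfe condition in algorithm} and $\Psi\circ\Upsilon_{x^k}(d^k)<0$, each $G_i(x^{k})$ is dominated by $G_i(x^0)$, so $x^k\in L_0$ for all $k$. Assumption \ref{assumption 2} then gives $\|x^k\|\le B$ for some $B$. Lemma \ref{descent direction finding lemma}(iv), applied on the compact closure of $L_0$, gives $\|v(x^k)\|\le\bar\gamma$. So \eqref{property eq 1} holds, and Property \ref{property} is in force with constants $b>1$ and $\lambda>0$. Lemmas \ref{convergence proof lemma1} and \ref{convergence proof lemma2} both apply, and the latter supplies a $\lambda>0$. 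I would shrink it if necessary so that it works for both results.

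\emph{Step 2: bound $\|d^l\|$ along a window.} Write
\[
x^l-x^{k-1}=\sum_{i=k}^{l}\|s^{i-1}\|\,u^{i-1}.
\]
Split $u^{i-1}=u^{k-1}+(u^{i-1}-u^{k-1})$ and use $\|s^{i-1}\|\le 2B$ to get
\[
\sum_{i=k}^{l}\|s^{i-1}\|\le 2B+2B\sum_{i=k}^{l}\|u^{i-1}-u^{k-1}\|.
\]
By Lemma \ref{convergence proof lemma1}(ii) and Cauchy--Schwarz, for any fixed $\Delta$ one can choose $k_0$ so large that
\[
\sum_{i\ge k_0}\|u^i-u^{i-1}\|^2\le\frac{1}{4\Delta}.
\]
Then $\|u^{i-1}-u^{k-1}\|\le\tfrac12$ for $k\le i\le k+\Delta-1$, $k\ge k_0$, and hence
\[
\sum_{i=k}^{k+\Delta-1}\|s^{i-1}\|\le 4B.
\]

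\emph{Step 3: bound $\|d^l\|$ uniformly.} Choose $\Delta$ as the smallest integer with $\Delta>8B/\lambda$. On one hand, Lemma \ref{convergence proof lemma2} gives $k\ge k_0$ with $|\mathcal{K}_{k,\Delta}^\lambda|>\Delta/2$. On the other hand, Step 2 gives
\[
\lambda\,|\mathcal{K}_{k,\Delta}^\lambda|<\sum_{i=k}^{k+\Delta-1}\|s^{i-1}\|\le 4B,
\]
so $|\mathcal{K}_{k,\Delta}^\lambda|<4B/\lambda<\Delta/2$. This is a contradiction.

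The argument as I have set it up never uses Lemma \ref{convergence proof lemma1}(i) directly beyond what Lemma \ref{convergence proof lemma2} already absorbed. Lemma \ref{convergence proof lemma2} supplies the large count of big steps, and Lemma \ref{convergence proof lemma1}(ii) prevents big steps from accumulating while $x^k$ stays bounded. I therefore expect the main obstacle to be Step 2. The concern is making the telescoping and direction-splitting bound rigorous when the $u^i$ are only asymptotically aligned: the constants must be fixed independently of $k$, and $\Delta$ must be chosen before $k_0$. A further point to check is that $\beta_k\ge0$, which the theorem assumes, is exactly what Lemma \ref{convergence proof lemma1}(ii) needs, so that lemma may legitimately be invoked here.
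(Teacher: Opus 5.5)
Your overall route is the paper's Gilbert--Nocedal argument: contradiction hypothesis, bounded iterates, Lemma \ref{convergence proof lemma1}(ii) to make the normalized directions nearly constant over a window of length $\Delta$, and Lemma \ref{convergence proof lemma2} to force many long steps in that window. Two parts are cleaner than the paper's version. Your Step 1 checks $x^k\in L_0$ and $\|v(x^k)\|\le\bar\gamma$, which Property \ref{property} presupposes. Your quantifier order also fixes $\lambda$ before $\Delta$, $k_0$, $k$.

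However, Step 2 contains a genuine error. After splitting $u^{i-1}=u^{k-1}+(u^{i-1}-u^{k-1})$, you replace the weights $\|s^{i-1}\|$ in the error sum by the uniform bound $2B$. This gives
\[
\sum_{i=k}^{l}\|s^{i-1}\|\le 2B+2B\sum_{i=k}^{l}\|u^{i-1}-u^{k-1}\|.
\]
With $\|u^{i-1}-u^{k-1}\|\le\tfrac12$ on a window of $\Delta$ indices, this yields only $\sum_{i=k}^{k+\Delta-1}\|s^{i-1}\|\le 2B+B\Delta$, not $4B$. That bound grows linearly in $\Delta$. Comparing it with $\lambda\Delta/2<\lambda|\mathcal{K}_{k,\Delta}^\lambda|$ would need $\lambda>2B$. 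You cannot arrange this, because $\lambda$ is dictated by Lemma \ref{convergence proof lemma2} and can only be shrunk; indeed, for $\lambda\ge 2B$ the set $\mathcal{K}_{k,\Delta}^\lambda$ is empty. So the contradiction in Step 3 does not follow from what you wrote.

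The missing idea is to keep $\|s^{i-1}\|$ as the weight in the error term, so the error can be absorbed into the left-hand side. Since $\|u^{k-1}\|=1$, the triangle inequality gives, for $l=k+\Delta-1$,
\[
\sum_{i=k}^{l}\|s^{i-1}\|\le \|x^l-x^{k-1}\|+\sum_{i=k}^{l}\|s^{i-1}\|\,\|u^{i-1}-u^{k-1}\|\le 2B+\tfrac12\sum_{i=k}^{l}\|s^{i-1}\|,
\]
hence $\sum_{i=k}^{k+\Delta-1}\|s^{i-1}\|\le 4B$. With this correction your Step 3 goes through verbatim, and the proof coincides with the paper's.

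A minor point: your Step 2 and Step 3 headings say you bound $\|d^l\|$, but no such bound is used there. The growth estimate on $\|d^l\|$ lives entirely inside Lemma \ref{convergence proof lemma2}.
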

\medskip
\begin{proof}
	 	If possible, suppose that there exists a constant $\gamma>0$ such that 
	 \[\left\|v\left(x^k\right)\right\|\geq \gamma\text{ for all } k\geq 0.\]
	 Let us define $u^i:=\tfrac{d^i}{\left\|d^i\right\|}$. For any two indices $l$ and $k$ with $l\geq k$, we get 
	 \begin{align*}
	 	x^l-x^{k-1}&=\sum_{i=k}^{l}t_{i-1}d^{i-1}=\sum_{i=k}^{l}\left\|t_{i-1}d^{i-1}\right\|\tfrac{d^{i-1}}{\left\|d^{i-1}\right\|}=\sum_{i=k}^{l}\left\|s^{i-1}\right\|u^{i-1}\\
	 	&=\sum_{i=k}^{l}\left\|s^{i-1}\right\|u^{k-1}+\sum_{i=k}^{l}\left\|s^{i-1}\right\|\left(u^{i-1}-u^{k-1}\right).
	 \end{align*}
	 Therefore, taking norms, we get 
	 \begin{align*}
	 \sum_{i=k}^{l}\left\|s^{i-1}\right\|\leq \left\|x^l-x^{k-1}\right\|	+\sum_{i=k}^{l}\left\|s^{i-1}\right\|\left\|u^{i-1}-u^{k-1}\right\|. 
	 \end{align*}
	 According to the Assumption \ref{assumption 2}, we have that the sequence $\left\{x^k\right\}$ is bounded. So, there exists a real number $M>0$ such that $\left\|x^k\right\|\leq M$ for all $k\geq1$. Therefore, we get 
	 \begin{align}\label{convergence theorem proof inequality 1}
	 	\sum_{i=k}^{l}\left\|s^{i-1}\right\|\leq 2M	+\sum_{i=k}^{l}\left\|s^{i-1}\right\|\left\|u^{i-1}-u^{k-1}\right\|.  
	 \end{align}
	 Let $\lambda>0$ be given and we define $\Delta:=\lceil \tfrac{8M}{\lambda}\rceil$, where $\lceil \cdot\rceil$ is the ceiling operator, i.e., $\lceil \tfrac{8M}{\lambda}\rceil$ returns the smallest integer that is greater than or equal to $ \tfrac{8M}{\lambda}$.
	 From Lemma \ref{convergence proof lemma1}, we can find an index $k_0$ such that 
	 \begin{align}\label{convergence theorem proof inequality 2}
	 \underset{k\geq k_0}{\mathlarger\sum}\left\|u^k-u^{k-1}\right\|^2\leq \tfrac{1}{4\Delta}.	 
	 \end{align}
	 With this $\Delta$ and $k_0$, Lemma \ref{convergence proof lemma2} gives an index $k\geq k_0$ such that 
	 \begin{align}\label{convergence theorem proof inequality 3}
	 	\left|\mathcal{K}_{k,\Delta}^\lambda\right|>\tfrac{\Delta}{2}. 
	 \end{align}
	  For any index $i\in\left[k,k+\Delta-1\right]$, using the Cauchy-Schwarz inequality and \eqref{convergence theorem proof inequality 2}, we get 
	  \begin{align*}
	  	\left\|u^{i-1}-u^{k-1}\right\|\leq \sum_{j=k}^{i-1}\left\|u^j-u^{j-1}\right\|\leq \sqrt{i-k}\sqrt{\sum_{j=k}^{i-1}\left\|u^j-u^{j-1}\right\|^2}\leq \sqrt{\Delta}\sqrt{\tfrac{1}{4\Delta}}=\tfrac{1}{2}.
	  \end{align*}
	  Consequently, from \eqref{convergence theorem proof inequality 1}, we get 
	   \begin{align*}
	  	\sum_{i=k}^{l}\left\|s^{i-1}\right\|\leq 2M	+\tfrac{1}{2}\sum_{i=k}^{l}\left\|s^{i-1}\right\|.  
	  \end{align*}
	  For $l=k+\Delta-1$, using \eqref{convergence theorem proof inequality 3}, we have 
	  \[2M\geq \tfrac{1}{2}\sum_{i=k}^{k+\Delta-1}\left\|s^{i-1}\right\|>\tfrac{\lambda}{2}\left|\mathcal{K}_{k,\Delta}^\lambda\right|>\tfrac{\lambda\Delta}{4}.\]
	  Therefore, we get \[\Delta<\tfrac{8M}{\lambda},\]
	  which contradicts to the definition $\Delta$, and this completes the proof.
\end{proof}
\medskip
Next, we study the global convergence of Algorithm \ref{Algorithm} with corresponding to PRP, HS, and LS variants of the NCGM.
\medskip
\begin{theorem}\label{PRP,HS,LS convergence theorem}
\normalfont
Suppose that Assumptions \ref{assumption 1} and \ref{assumption 2} hold. Consider the Algorithm \ref{Algorithm} with \[\beta_k:=\max\left\{\beta_k^{PRP},0\right\}\text{ or }\beta_k:=\max\left\{\beta_k^{HS},0\right\}\text{ or } \beta_k:=\max\left\{\beta_k^{LS},0\right\}.\] If $t_k$ satisfies the standard Wolfe conditions \eqref{Standard Wolfe condition in algorithm}, and $d^k$ is a sufficient descent direction of $G$ at $x^k$, then $\underset{k\to\infty}{\liminf}\left\|v\left(x^k\right)\right\|=0.$	 
\end{theorem}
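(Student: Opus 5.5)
The plan is to argue by contradiction and reduce to Theorem \ref{convergence theorem}. Suppose that $\liminf_{k\to\infty}\|v(x^k)\|>0$; discarding finitely many indices, there is $\gamma>0$ with $\|v(x^k)\|\ge\gamma$ for all $k$. By \eqref{Standard Wolfe condition in algorithm} and $\Psi\circ\Upsilon_{x^k}(d^k)<0$, every iterate satisfies $G_i(x^k)\preceq G_i(x^0)$, so $x^k\in L_0$. Assumption \ref{assumption 2} makes $L_0$ bounded, say $\|x\|\le M$ on $L_0$, and hence $\|s^{k-1}\|\le 2M$. Lemma \ref{descent direction finding lemma}(iv), applied on the compact closure of $L_0$, gives $\bar\gamma$ with $\|v(x^k)\|\le\bar\gamma$. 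So \eqref{property eq 1} holds. In all three cases $\beta_k\ge 0$ because of the truncation $\max\{\cdot,0\}$, and $|\beta_k|\le|\beta_k^{\ast}|$ for $\ast\in\{PRP,HS,LS\}$. If I show that each method has Property \ref{property}, Theorem \ref{convergence theorem} gives $\liminf\|v(x^k)\|=0$, which is the contradiction.

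Two auxiliary facts are needed.

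First, the identity $\Psi\circ\Upsilon_x(v(x))=-\|v(x)\|^2$. Each $\overline{g}_x^i$ in \eqref{gxi-lower-upper} is positively homogeneous, so $\Psi\circ\Upsilon_x$ is positively homogeneous. Minimizing $t\,\Psi\circ\Upsilon_x(v(x))+\tfrac{t^2}{2}\|v(x)\|^2$ over $t>0$ must give the minimum at $t=1$, and this forces the identity. It also gives $\xi(x)=-\tfrac12\|v(x)\|^2$.

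Second, a Lipschitz estimate in the base point:
\[
\left|\Psi\circ\Upsilon_{x}(w)-\Psi\circ\Upsilon_{y}(w)\right|\le \max_{i\in\Lambda}\left|\overline{g}^i_x(w)-\overline{g}^i_y(w)\right|\le \bar L\,\|x-y\|\,\|w\|
\]
for $x,y\in L_0$. To get it, I will use Assumption \ref{assumption 1}: the $gH$-Lipschitz continuity of $\nabla_{gH}G_i$ bounds the componentwise differences of the midpoint and half-width vectors $\tfrac12(\underline{\nabla_{gH}G_i}+\overline{\nabla_{gH}G_i})$ and $\tfrac12(\overline{\nabla_{gH}G_i}-\underline{\nabla_{gH}G_i})$ by a multiple of $L_i\|x-y\|$. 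Cauchy--Schwarz applied to \eqref{gxi-lower-upper} then gives the bound with, e.g., $\bar L=2\sqrt n\max_i L_i$. I expect this step to be the main technical obstacle, because the norm on vectors of intervals and the passage from the $gH$-difference to endpoint differences must be handled carefully.

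With these facts, the numerator $N_k:=-\Psi\circ\Upsilon_{x^k}(v(x^k))+\Psi\circ\Upsilon_{x^{k-1}}(v(x^k))$ shared by all three parameters satisfies $|N_k|\le\bar L\bar\gamma\|s^{k-1}\|$. The denominators are bounded below as follows.

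\textbf{PRP.} The denominator equals $\|v(x^{k-1})\|^2\ge\gamma^2$.

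\textbf{LS.} By \eqref{sufficient descent condition} and the identity, $-\Psi\circ\Upsilon_{x^{k-1}}(d^{k-1})\ge c\|v(x^{k-1})\|^2\ge c\gamma^2$.

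\textbf{HS.} The second Wolfe condition gives $\Psi\circ\Upsilon_{x^k}(d^{k-1})\ge\sigma\Psi\circ\Upsilon_{x^{k-1}}(d^{k-1})$. Hence the denominator is at least $-(1-\sigma)\Psi\circ\Upsilon_{x^{k-1}}(d^{k-1})\ge(1-\sigma)c\gamma^2>0$.

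In each case this yields $|\beta_k|\le C\|s^{k-1}\|$, with $C=\bar L\bar\gamma/\gamma^2$, $\bar L\bar\gamma/(c\gamma^2)$ or $\bar L\bar\gamma/((1-\sigma)c\gamma^2)$ respectively. Now set $b:=\max\{2,2MC\}>1$, so that $\beta_k\le C\cdot 2M\le b$, and set $\lambda:=1/(2bC)$, so that $\|s^{k-1}\|\le\lambda$ implies $|\beta_k|\le\tfrac1{2b}$. This establishes Property \ref{property} for each truncated parameter. All hypotheses of Theorem \ref{convergence theorem} then hold, which finishes the argument.
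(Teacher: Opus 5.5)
Your proposal is correct and follows the paper's route. You assume $\|v(x^k)\|$ is bounded away from zero and verify Property~\ref{property} for each truncated parameter: the shared numerator is bounded via the Lipschitz estimate, and the denominators are bounded below directly for PRP, through the sufficient descent condition for LS, and through the second Wolfe condition plus sufficient descent for HS. You then invoke Theorem~\ref{convergence theorem}. Your differences from the paper are minor refinements. You use the exact identity $\Psi\circ\Upsilon_x(v(x))=-\|v(x)\|^2$ where the paper uses $\Psi\circ\Upsilon_x(v(x))<-\tfrac12\|v(x)\|^2$, and you obtain $\beta_k\le b$ from $\|s^{k-1}\|\le 2M$ rather than from the paper's gradient bound $\bar c$. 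You also justify $x^k\in L_0$ and the upper bound $\bar\gamma$, which the paper leaves implicit.
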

\medskip
\begin{proof}
	Assume that \[	0<\gamma\leq \left\|v\left(x^k\right)\right\|\leq \bar{\gamma} \text{ for all } k\geq0.\]
	Then, 
	\begin{equation}\label{proof of PRP,HS,LS convergence eq 1}
		\Psi\circ\Upsilon_{x^k}\left(v\left(x^k\right)\right)<-\tfrac{1}{2}\left\|v\left(x^k\right)\right\|^2\leq -\tfrac{1}{2}\gamma^2.
	\end{equation}
	On the other hand, for all $i\in\Lambda$, using Cauchy-Schwarz inequality, we get 
	\begin{align*}
		\Psi\circ\Upsilon_{x^k}\left(v\left(x^k\right)\right)\geq & \tfrac{1}{2}\left(\underline{\nabla_{gH} G_i}\left(x^k\right)+\overline{\nabla_{gH} G_i}\left(x^k\right)\right)^\top v\left(x^k\right)\\
		&+\tfrac{1}{2}\left({\overline{\nabla_{gH} G_i}\left(x^k\right)-\underline{\nabla_{gH} G_i}\left(x^k\right)}\right)^\top \left|v\left(x^k\right)\right|\\
		\geq & -\tfrac{1}{2}\left\|\underline{\nabla_{gH} G_i}\left(x^k\right)+\overline{\nabla_{gH} G_i}\left(x^k\right)\right\|\left\| v\left(x^k\right)\right\|\\
		&-\tfrac{1}{2}\left\|{\overline{\nabla_{gH} G_i}\left(x^k\right)-\underline{\nabla_{gH} G_i}\left(x^k\right)}\right\| \left\|v\left(x^k\right)\right\|. 
	\end{align*}
	Therefore, we get 
	\begin{equation}\label{proof of PRP,HS,LS convergence eq 2}
		-	\Psi\circ\Upsilon_{x^k}\left(v\left(x^k\right)\right)\leq \bar{c} \bar{\gamma},
	\end{equation}
	where $\bar{c}$ is satisfying the relation 
	\[\tfrac{1}{2}\left\|\underline{\nabla_{gH} G_i}\left(x^k\right)+\overline{\nabla_{gH} G_i}\left(x^k\right)\right\|
	+\tfrac{1}{2}\left\|{\overline{\nabla_{gH} G_i}\left(x^k\right)-\underline{\nabla_{gH} G_i}\left(x^k\right)}\right\| \leq \bar{c}\]
	for all $i\in\Lambda$ and for all $k\geq0$.
	Note that there exists an $i_0\in\Lambda$ such that
	\begin{align*}
		 \left|\Psi\circ\Upsilon_{x^{k-1}}\left(v\left(x^k\right)\right) \right|=&\biggl| \tfrac{1}{2}\left(\underline{\nabla_{gH} G_{i_0}}\left(x^{k-1}\right)+\overline{\nabla_{gH} G_{i_0}}\left(x^{k-1}\right)\right)^\top v\left(x^k\right)\\
		 &+\tfrac{1}{2}\left({\overline{\nabla_{gH} G_{i_0}}\left(x^{k-1}\right)-\underline{\nabla_{gH} G_{i_0}}\left(x^{k-1}\right)}\right)^\top \left|v\left(x^k\right)\right|\biggr|\\
		 \leq &\tfrac{1}{2}\left\|\underline{\nabla_{gH} G_{i_0}}\left(x^{k-1}\right)+\overline{\nabla_{gH} G_{i_0}}\left(x^{k-1}\right)\right\|\left\| v\left(x^k\right)\right\|\\
		 & + \tfrac{1}{2}\left\|{\overline{\nabla_{gH} G_{i_0}}\left(x^{k-1}\right)-\underline{\nabla_{gH} G_{i_0}}\left(x^{k-1}\right)}\right\| \left\|v\left(x^k\right)\right\|.
	\end{align*}
	Therefore, we get 
	\begin{equation}\label{proof of PRP,HS,LS convergence eq 3}
	\left|\Psi\circ\Upsilon_{x^{k-1}}\left(v\left(x^k\right)\right) \right|\leq \bar{c}\bar{\gamma}.	 
	\end{equation}
	Combining \eqref{proof of PRP,HS,LS convergence eq 1} and \eqref{proof of PRP,HS,LS convergence eq 2}, we get 
	\begin{equation}\label{proof of PRP,HS,LS convergence eq 4}
		\tfrac{1}{2}\gamma^2<-\Psi\circ\Upsilon_{x^k}\left(v\left(x^k\right)\right)\leq\bar{c}\bar{\gamma}.
	\end{equation}
		We have 
	\begin{align*}
		& -\Psi\circ\Upsilon_{x^{k}}\left(v\left(x^k\right)\right)+\Psi\circ\Upsilon_{x^{k-1}}\left(v\left(x^k\right)\right)\\
		=&-\underset{i\in\Lambda}{\max}~\tfrac{1}{2}\left[\left(\underline{\nabla_{gH}G_i}\left(x^{k}\right)+\overline{\nabla_{gH}G_i}\left(x^{k}\right)\right)^\top v\left(x^k\right)+\left|\overline{\nabla_{gH}G_i}\left(x^{k}\right)-\underline{\nabla_{gH}G_i}\left(x^{k}\right)\right|^\top \left|v\left(x^k\right)\right|\right]\\
		&+\underset{i\in\Lambda}{\max}~\tfrac{1}{2}\left[\left(\underline{\nabla_{gH}G_i}\left(x^{k-1}\right)+\overline{\nabla_{gH}G_i}\left(x^{k-1}\right)\right)^\top v\left(x^k\right)+\left|\overline{\nabla_{gH}G_i}\left(x^{k-1}\right)-\underline{\nabla_{gH}G_i}\left(x^{k-1}\right)\right|^\top \left|v\left(x^k\right)\right|\right]\\
		\leq &\underset{i\in\Lambda}{\max}~\tfrac{1}{2}\biggl[\left(\underline{\nabla_{gH}G_i}\left(x^{k-1}\right)+\overline{\nabla_{gH}G_i}\left(x^{k-1}\right)\right)^\top v\left(x^k\right)+\left|\overline{\nabla_{gH}G_i}\left(x^{k-1}\right)-\underline{\nabla_{gH}G_i}\left(x^{k-1}\right)\right|^\top \left|v\left(x^k\right)\right|\\
		&\hspace{2cm}-\left(\underline{\nabla_{gH}G_i}\left(x^{k}\right)+\overline{\nabla_{gH}G_i}\left(x^{k}\right)\right)^\top v\left(x^k\right)-\left|\overline{\nabla_{gH}G_i}\left(x^{k}\right)-\underline{\nabla_{gH}G_i}\left(x^{k}\right)\right|^\top \left|v\left(x^k\right)\right|\biggr]\\
		\leq &\underset{i\in\Lambda}{\max}~\tfrac{1}{2}\biggl[\left(\underline{\nabla_{gH}G_i}\left(x^{k-1}\right)-\underline{\nabla_{gH}G_i}\left(x^{k}\right)+\overline{\nabla_{gH}G_i}\left(x^{k-1}\right)-\overline{\nabla_{gH}G_i}\left(x^{k}\right)\right)^\top v\left(x^k\right)\\
		&\hspace{2cm}+\left|\overline{\nabla_{gH}G_i}\left(x^{k-1}\right)-\overline{\nabla_{gH}G_i}\left(x^{k}\right)+\underline{\nabla_{gH}G_i}\left(x^{k}\right)-\underline{\nabla_{gH}G_i}\left(x^{k-1}\right)\right|^\top \left|v\left(x^k\right)\right|\biggr]\\
		\leq &\underset{i\in\Lambda}{\max}~\tfrac{1}{2}\biggl[\left\|\underline{\nabla_{gH}G_i}\left(x^{k-1}\right)-\underline{\nabla_{gH}G_i}\left(x^{k}\right)+\overline{\nabla_{gH}G_i}\left(x^{k-1}\right)-\overline{\nabla_{gH}G_i}\left(x^{k}\right)\right\| \left\|v\left(x^k\right)\right\|\\
		&\hspace{2cm}+\left\|\overline{\nabla_{gH}G_i}\left(x^{k-1}\right)-\overline{\nabla_{gH}G_i}\left(x^{k}\right)+\underline{\nabla_{gH}G_i}\left(x^{k}\right)-\underline{\nabla_{gH}G_i}\left(x^{k-1}\right)\right\| \left\|v\left(x^k\right)\right\|\biggr]\\
		\leq &\underset{i\in\Lambda}{\max}~\tfrac{1}{2}\left[2\left(\left\|\underline{\nabla_{gH}G_i}\left(x^{k-1}\right)-\underline{\nabla_{gH}G_i}\left(x^{k}\right)\right\|+\left\|\overline{\nabla_{gH}G_i}\left(x^{k-1}\right)-\overline{\nabla_{gH}G_i}\left(x^{k}\right)\right\|\right) \left\|v\left(x^k\right)\right\|\right]\\
		=&\underset{i\in\Lambda}{\max}~\left(\left\|\underline{\nabla_{gH}G_i}\left(x^{k-1}\right)-\underline{\nabla_{gH}G_i}\left(x^{k}\right)\right\|+\left\|\overline{\nabla_{gH}G_i}\left(x^{k-1}\right)-\overline{\nabla_{gH}G_i}\left(x^{k}\right)\right\|\right) \left\|v\left(x^k\right)\right\|.
	\end{align*}
	Therefore, using Assumption \eqref{assumption 1}, we obtain 
	\begin{align*}
		\left|-\Psi\circ\Upsilon_{x^{k}}\left(v\left(x^k\right)\right)+\Psi\circ\Upsilon_{x^{k-1}}\left(v\left(x^k\right)\right)\right|\leq \underset{i\in\Lambda}{\max}~ 2L_i \left\|x^k-x^{k-1}\right\| \left\|v\left(x^k\right)\right\|=L\left\|s^{k-1}\right\| \left\|v\left(x^k\right)\right\|,
	\end{align*}
	 where $L:=\underset{i\in\Lambda}{\max}~ 2L_i.$ Therefore, for $\left\|s^{k-1}\right\|\leq\lambda$, we get 
	 \begin{equation}\label{proof of PRP,HS,LS convergence eq 5}
	 	\left|-\Psi\circ\Upsilon_{x^{k}}\left(v\left(x^k\right)\right)+\Psi\circ\Upsilon_{x^{k-1}}\left(v\left(x^k\right)\right)\right|\leq L\lambda\bar{\gamma}. 
	 \end{equation}
	 For the PRP method, we define $b:=\tfrac{4\bar{c}\bar{\gamma}}{\gamma^2}$ and $\lambda:=\tfrac{\gamma^2}{4L\bar{\gamma}b}$. Using \eqref{proof of PRP,HS,LS convergence eq 3} and \eqref{proof of PRP,HS,LS convergence eq 4}, we get 
	 \[\left|\beta_k^{PRP}\right|\leq \tfrac{-\Psi\circ\Upsilon_{x^k}\left(v\left(x^k\right)\right)+\left|\Psi\circ\Upsilon_{x^{k-1}}\left(v\left(x^k\right)\right)\right|}{-\Psi\circ\Upsilon_{x^{k-1}}\left(v\left(x^{k-1}\right)\right)}\leq\tfrac{4\bar{c}\bar{\gamma}}{\gamma^2}=b,\]
	 and for $\left\|s^{k-1}\right\|\leq\lambda$, using \eqref{proof of PRP,HS,LS convergence eq 4} and \eqref{proof of PRP,HS,LS convergence eq 5}, we get that 
	 \[\left|\beta_k^{PRP}\right|\leq \tfrac{2L\lambda\bar{\gamma}}{\gamma^2}=\tfrac{1}{2b},\]
	 which shows that the PRP method has Property \ref{property}. Therefore, from Theorem \ref{convergence theorem}, it follows that \[\underset{k\to\infty}{\liminf}\left\|v\left(x^k\right)\right\|=0.\]	
	 
	 Next, we consider the HS method. Using the standard Wolfe conditions \eqref{Standard Wolfe condition in algorithm}, the sufficient descent condition \eqref{sufficient descent condition}, and \eqref{proof of PRP,HS,LS convergence eq 4}, we obtain that 
	 \begin{align*}
	 	 \Psi\circ\Upsilon_{x^{k}}\left(d^{k-1}\right)-\Psi\circ\Upsilon_{x^{k-1}}\left(d^{k-1}\right)&\geq \sigma\Psi\circ\Upsilon_{x^{k-1}}\left(d^{k-1}\right)-\Psi\circ\Upsilon_{x^{k-1}}\left(d^{k-1}\right)\\
	 	 &=-\left(1-\sigma\right)\Psi\circ\Upsilon_{x^{k-1}}\left(d^{k-1}\right)\\
	 	 &\geq -c\left(1-\sigma\right)\Psi\circ\Upsilon_{x^{k-1}}\left(v\left(x^{k-1}\right)\right)\\
	 	 &\geq \tfrac{c\left(1-\sigma\right)\gamma^2}{2}>0.
	 \end{align*}
	 Therefore, we have 
	 \begin{equation}\label{proof of PRP,HS,LS convergence eq 6}
	 	\Psi\circ\Upsilon_{x^{k}}\left(d^{k-1}\right)-\Psi\circ\Upsilon_{x^{k-1}}\left(d^{k-1}\right)\geq \tfrac{c\left(1-\sigma\right)\gamma^2}{2}>0. 
	 \end{equation}
	 Let us now define $b:=\tfrac{4\bar{c}\bar{\gamma}}{c\left(1-\sigma\right)\gamma^2}$ and $\lambda:=\tfrac{c\left(1-\sigma\right)\gamma^2}{4L\bar{\gamma}b}$.
	 Using \eqref{proof of PRP,HS,LS convergence eq 3}, \eqref{proof of PRP,HS,LS convergence eq 4}, and \eqref{proof of PRP,HS,LS convergence eq 6}, we get
	 \[\left|\beta_k^{HS}\right|\leq \tfrac{-\Psi\circ\Upsilon_{x^k}\left(v\left(x^k\right)\right)+\left|\Psi\circ\Upsilon_{x^{k-1}}\left(v\left(x^k\right)\right)\right|}{\Psi\circ\Upsilon_{x^{k}}\left(d^{k-1}\right)-\Psi\circ\Upsilon_{x^{k-1}}\left(d^{k-1}\right)}\leq\tfrac{4\bar{c}\bar{\gamma}}{c\left(1-\sigma\right)\gamma^2}=b,\]
	 and for $\left\|s^{k-1}\right\|\leq\lambda$, using \eqref{proof of PRP,HS,LS convergence eq 5} and \eqref{proof of PRP,HS,LS convergence eq 6}, we get that 
	 \[\left|\beta_k^{HS}\right|\leq \tfrac{2L\lambda\bar{\gamma}}{c\left(1-\sigma\right)\gamma^2}=\tfrac{1}{2b},\]
	 which shows that the HS method has Property \ref{property}. Therefore, from Theorem \ref{convergence theorem}, it follows that \[\underset{k\to\infty}{\liminf}\left\|v\left(x^k\right)\right\|=0.\]	
	 
	 Let us consider the LS method. Using the sufficient descent condition \eqref{sufficient descent condition} and \eqref{proof of PRP,HS,LS convergence eq 4}, we obtain that 
	 \begin{equation}\label{proof of PRP,HS,LS convergence eq 7}
	 	-\Psi\circ\Upsilon_{x^{k-1}}\left(d^{k-1}\right)\geq -c\Psi\circ\Upsilon_{x^{k-1}}\left(v\left(x^{k-1}\right)\right)\geq \tfrac{c\gamma^2}{2}>0.
	 \end{equation}
	  Let us now define $b:=\tfrac{4\bar{c}\bar{\gamma}}{c\gamma^2}$ and $\lambda:=\tfrac{c\gamma^2}{4L\bar{\gamma}b}$.
	 Using \eqref{proof of PRP,HS,LS convergence eq 3}, \eqref{proof of PRP,HS,LS convergence eq 4}, and \eqref{proof of PRP,HS,LS convergence eq 7}, we get
	 \[\left|\beta_k^{LS}\right|\leq \tfrac{-\Psi\circ\Upsilon_{x^k}\left(v\left(x^k\right)\right)+\left|\Psi\circ\Upsilon_{x^{k-1}}\left(v\left(x^k\right)\right)\right|}{-\Psi\circ\Upsilon_{x^{k-1}}\left(d^{k-1}\right)}\leq\tfrac{4\bar{c}\bar{\gamma}}{c\gamma^2}=b,\]
	 and for $\left\|s^{k-1}\right\|\leq\lambda$, using \eqref{proof of PRP,HS,LS convergence eq 5} and \eqref{proof of PRP,HS,LS convergence eq 7}, we get that 
	 \[\left|\beta_k^{LS}\right|\leq \tfrac{2L\lambda\bar{\gamma}}{c\gamma^2}=\tfrac{1}{2b},\]
	 which shows that the LS method has Property \ref{property}. Therefore, from Theorem \ref{convergence theorem}, it follows that \[\underset{k\to\infty}{\liminf}\left\|v\left(x^k\right)\right\|=0.\]	
	 Thus, the proof is concluded.
\end{proof}
\section{Numerical Experiments}\label{Numerical Experiments}
In this section, we make a report on the computational performance of Algorithm \ref{Algorithm} applied on a collection of standard test problems taken from \cite{mondal2025steepest}. All computations were implemented in MATLAB 2023a and executed on a machine with an Intel(R) Core(TM) i5-1035G1 (10th generation) processor operating between 1.00 GHz and 1.19 GHz, supported by 8 GB RAM.

The MATLAB implementation of Algorithm \ref{Algorithm} is carried out under the following configuration:
\begin{itemize}
	\item The starting point $x^0$ is randomly generated within the feasible domain prescribed for each problem, consistent with the bounds reported in \cite{mondal2025steepest}. The sampling is performed using the MATLAB command ``rand".
	
	\item The Wolfe line search parameters are taken across all experiments as $\rho = 0.001$ and $\sigma = 0.1$. At every iteration $k$, the step size $t_k$ is computed so that the standard Wolfe conditions in \eqref{Standard Wolfe condition in algorithm} are satisfied.
	
	\item At each iteration, the quantities $v(x^k)$ and $\xi(x^k)$ are obtained by solving the quadratic subproblem given in \eqref{equivalent constrained problem} using the ``quadprog" solver available in MATLAB.
	
	\item The stopping rule for Algorithm \ref{Algorithm} is based on the condition $\xi(x^k) > -\epsilon$. In this termination condition, the acceptance or the tolerance parameter is fixed at $\epsilon = 10^{-6}$.
	
	\item The parameter $\beta_k$ is selected differently depending on the variant: for the PRP, HS, and LS methods, we use $\max\{0,\beta_k^{PRP}\}$, $\max\{0,\beta_k^{HS}\}$, and $\max\{0,\beta_k^{LS}\}$, respectively.
\end{itemize}

With the above setup, Algorithm \ref{Algorithm} is applied to all test problems from \cite{mondal2025steepest}. To evaluate its efficiency and make its comparison with existing approaches, including the SD method \cite{mondal2025steepest} and other NCGMs namely FR, CD, DY, and mDY \cite{mondal2026nonlinear}, we record the number of iterations and computational time (in seconds). These results are summarized in Table \ref{performance table on iteration} and Table \ref{performance table on CPU}.

For each test problem, 100 randomly generated initial points are considered. Based on these runs, we report the triplet $(\min, \text{mean}, \max)$ for both iteration counts and computational time.

To further compare performance, we adopt the Dolan--Mor{\'e} performance profile framework \cite{dolan2002benchmarking}. Let $\mathcal{S}$ denote the set of solvers and $\mathcal{P}$ the collection of test problems. Let $N_s$ and $N_p$ be the number of solvers and problems, respectively. We use the mean iteration count and mean computational time as performance indicators. For every problem $p \in \mathcal{P}$ and solver $s \in \mathcal{S}$, define
\[
I_{p,s} := \text{mean iteration count required by solver } s \text{ to solve problem } p, \mbox{ and }
\]
\[
T_{p,s} := \text{mean CPU time required by solver } s \text{ to solve problem } p.
\]

To assess relative efficiency, we introduce the performance ratios
\[
R_{p,s}^I := \frac{I_{p,s}}{\min\{I_{p,s} : s \in \mathcal{S}\}} 
\text{ and }
R_{p,s}^T := \frac{T_{p,s}}{\min\{T_{p,s} : s \in \mathcal{S}\}}.
\]

The performance profiles associated with iteration counts and CPU time are defined as functions $F_I, F_T : \mathbb{R} \to [0,1]$ by
\[F_I\left(z\right):=\tfrac{1}{N_p}\text{ size }\left\{p\in{\mathcal{P}}:R_{p,s}^I\leq z\right\} \text{ and } F_T\left(z\right):=\tfrac{1}{N_p}\text{ size }\left\{p\in{\mathcal{P}}:R_{p,s}^T\leq z\right\}\text{ for every } z\in {\mathbb{R}}.\]

These functions represent the probability that a given solver $s \in \mathcal{S}$ performs within a factor $z$ of the best solver, corresponding to iteration count and CPU time, respectively. In other words, $F_I$ and $F_T$ correspond to the cumulative distribution functions of the ratios $R_{p,s}^I$ and $R_{p,s}^T$. The resulting performance profiles are depicted in Figure \ref{figure:performance profile}.

		\begin{figure}[H]
		\begin{subfigure}[t]{0.42\textwidth}
			\includegraphics[width=\linewidth]{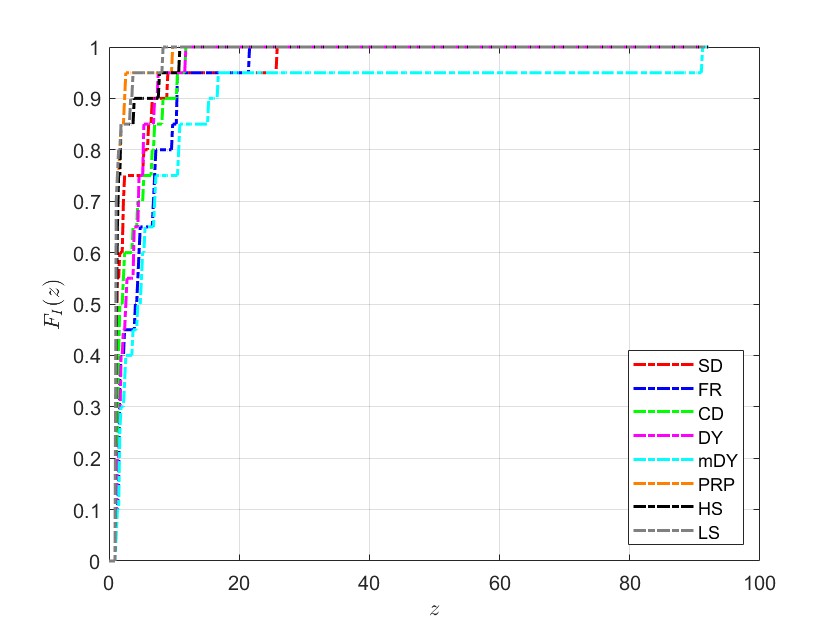}
			\caption{Performance profile with respect to mean iteration counts}
			\label{iteration}
		\end{subfigure}\hfill
		\begin{subfigure}[t]{0.42\textwidth}
			\includegraphics[width=\linewidth]{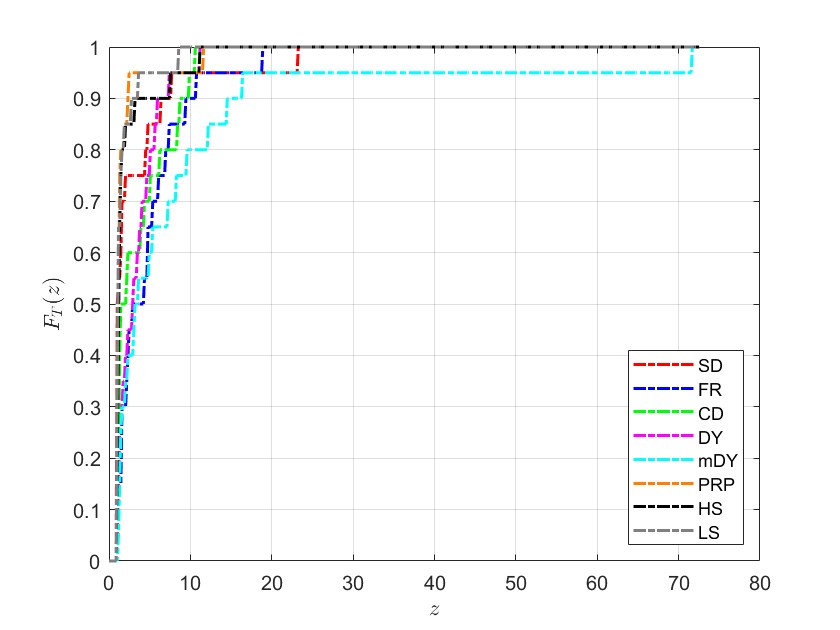}
			\caption{Performance profile with respect to mean computational time}
			\label{CPU time}
		\end{subfigure}
		\caption{Performance profile.}
		\label{figure:performance profile}
	\end{figure}

\clearpage
\begin{landscape}
{\footnotesize 
\centering 
\begin{longtable}{@{} l *{8}{c} @{}}
		\caption{Performance of Algorithm \ref{Algorithm} with respect to iteration count \label{performance table on iteration}} \\
		
		\toprule
		\multirow{2}{*}{\text{Problem}} &
		\text{SD} & \text{FR} & \text{CD} & \text{DY} & \text{mDY} & \text{PRP} & \text{HS} & \text{LS} \\
		& {\footnotesize (min, mean, max)} & {\footnotesize (min, mean, max)} & {\footnotesize (min, mean, max)} & {\footnotesize (min, mean, max)} & {\footnotesize (min, mean, max)} & {\footnotesize (min, mean, max)} & {\footnotesize (min, mean, max)} & {\footnotesize (min, mean, max)} \\
		\midrule
		\endfirsthead
		
		\toprule
		\multirow{2}{*}{\text{Problem}} &
		\text{SD} & \text{FR} & \text{CD} & \text{DY} & \text{mDY} & \text{PRP} & \text{HS} & \text{LS} \\
		& {\footnotesize (min, mean, max)} & {\footnotesize (min, mean, max)} & {\footnotesize (min, mean, max)} & {\footnotesize (min, mean, max)} & {\footnotesize (min, mean, max)} & {\footnotesize (min, mean, max)} & {\footnotesize (min, mean, max)} & {\footnotesize (min, mean, max)} \\
		\midrule
		\endhead
		
		\midrule \multicolumn{9}{r}{\footnotesize Continued on next page} \\
		\endfoot
		
		\bottomrule
		\endlastfoot
		
		I-BK1 & (0, 2.49, 5) & (0, 3.07, 13) & (0, 3.21, 9) & (0, 2.83, 7) & (0, 3.45, 11) & (0, 2.67, 5) & (0, 2.63, 5) & (0, 2.24, 5)\\
		I-VU2 & (0, 27.74, 69) & (0, 228.32, 667)
		 & (0, 192.80, 571) & (0, 125.57, 313) & (0, 394.51, 1206) & (0, 23.70, 63) & (0, 25.67, 65) & (0, 24.06, 65) \\
		I-CH &  (1, 5.87, 11) & (1, 28.01, 106)
		 & (1, 13.07, 32) & (1, 10.75, 22) & (1, 21.20, 75)  & (1, 5.97, 9) & (1, 5.99, 9) & (1, 6.04, 9)\\
		I-FON &   (0, 10.75, 45) & (0, 112.84, 493)
		 & (0, 46.24, 329) & (0, 48.80, 199) & (0, 114.55, 748) & (0, 12.22, 54) & (0, 10.97, 57) & (0, 11.02, 50)\\
		 I-KW2 &   (0, 19.02, 67) & (0, 82.30, 648)
		  & (0, 124.78, 1621) & (0, 86.09, 609) & (0, 135.84, 820) & (0, 32.22, 150) & (0, 34.43, 232) & (0, 32.72, 141)\\
		 I-Far1 &   (0, 4.15, 57) & (0, 18.92, 189) & (0, 21.46, 631) & (0, 30.97, 352) & (0, 20.21, 259) & (0, 9.71, 163) & (0, 5.22, 58) & (0, 5.49, 216)\\
		 I-Hil1 &   (0, 10.86, 111) & (0, 46.01, 340) & (0, 47.74, 480) & (0, 25.91, 164) & (0, 73.22, 684) & (0, 6.83, 60) & (0, 9.58, 70) & (0, 8.21, 67)\\
		 I-PNR &    (0, 6.43, 22) & (0, 43.94, 126) & (0, 23.65, 167) & (0, 17.73, 86) & (0, 44.90, 146) & (0, 7.86, 23) & (0, 6.91, 16)
		  & (0, 7.09, 15)\\
		 I-Deb &    (0, 224.71, 20007) & (0, 13.16, 85)
		  & (0, 8.74, 79) & (0, 15.62, 105) & (0, 10.39, 92) & (0, 21.27, 1712) & (0, 66.40, 1563) & (0, 31.35, 2750)\\
		 I-SD &   (0, 3.64, 8) & (0, 6.00, 20)
		 & (0, 4.48, 14) & (0, 4.09, 11) & (0, 5.02, 15) & (0, 3.48, 7) & (0, 3.30, 7) & (0, 3.34, 6)\\
    	I-IKK1 &   (0, 6.28, 349) & (0, 21.56, 880)
		 & (0, 35.81, 1251) & (0, 16.06, 488) & (0, 277.85, 8227) & (0, 3.05, 104) & (0, 11.56, 176) & (0, 9.75, 417)\\
		 I-VFM1 &    (0, 1.52, 4) & (0, 1.97, 6)
		 & (0, 1.65, 6) & (0, 1.46, 5) & (0, 1.88, 6) & (0, 1.58, 5) & (0, 1.50, 4) & (0, 1.24, 4)\\
		 I-MHHM2 &    (0, 3.37, 6) & (1, 8.06, 22)
		 & (1, 7.13, 14) & (0, 6.11, 12) & (0, 7.92, 21) & (1, 3.57, 6) & (1, 3.46, 6) & (0, 3.39, 6)\\
I-Viennet & (0, 1.88, 100) & (0, 3.23, 103) & (0, 1.29, 37) & (0, 1.65, 36) & (0, 4.09, 88) & (0, 1.25, 15) & (0, 1.15, 22) & (0, 0.80, 12)\\
I-AP1 &    (0, 93.98, 1569) & (0, 343.20, 3408)
& (0, 167.00, 1571) & (1, 111.50, 1095) & (0, 28.40, 237) & (0, 18.30, 154) & (0, 16.20, 146) & (0, 15.90, 138)\\
	I-MOP7 &    (25, 67.78, 125) & (2, 7.56, 15)
& (2, 7.99, 14) & (2, 8.08, 15) & (2, 8.14, 15) & (2, 7.63, 11)
 & (3, 7.53, 11) & (4, 7.55, 11)\\
I-VFM2  &    (0, 17.82, 34) & (0, 35.47, 2703)
& (0, 5.50, 87) & (0, 5.04, 58) & (0, 14.74, 351)
& (0, 3.40, 25) & (0, 3.62, 36)
 & (0, 3.51, 40)\\
I-TR1 &    (8, 8.64, 10) & (4, 4.00, 4) & (5, 5.00, 5) & (8, 9.60, 12) & (8, 10.03, 13) & (4, 4.00, 4) & (6, 6.68, 7) & (4, 4.00, 4)\\
I-AP4 &    (0, 141.71, 2301) & (0, 34.88, 301)
& (0, 21.56, 209) & (0, 253.04, 5989) & (0, 328.60, 7345) & (0, 207.51, 2667) & (1, 233.44, 2307) & (0, 177.67, 2300)\\
I-Comet &    (0, 80.01, 131) & (0, 117.40, 278)
& (0, 85.20, 386) & (0, 294.80, 602) & (0, 425.00, 903) & (0, 85.93, 136) & (0, 89.49, 135) & (0, 77.89, 134)\\
\end{longtable} }    
\end{landscape}

    \clearpage
\begin{landscape}
{\footnotesize     
	\begin{longtable}{@{} l *{8}{c} @{}}
		\caption{Performance of Algorithm \ref{Algorithm} with respect to CPU time (s)\label{performance table on CPU}} \\
		
		\toprule
		\multirow{2}{*}{\text{Problem}} &
		\text{SD} & \text{FR} & \text{CD} & \text{DY} & \text{mDY} & \text{PRP} & \text{HS} & \text{LS} \\
		& {\footnotesize (min, mean, max)} & {\footnotesize (min, mean, max)} & {\footnotesize (min, mean, max)} & {\footnotesize (min, mean, max)} & {\footnotesize (min, mean, max)}  & {\footnotesize (min, mean, max)} & {\footnotesize (min, mean, max)} & {\footnotesize (min, mean, max)} \\
		\midrule
		\endfirsthead
		
		\toprule
		\multirow{2}{*}{\text{Problem}} &
		\text{SD} & \text{FR} & \text{CD} & \text{DY} & \text{mDY} & \text{PRP} & \text{HS} & \text{LS} \\
		& {\footnotesize (min, mean, max)} & {\footnotesize (min, mean, max)} & {\footnotesize (min, mean, max)} & {\footnotesize (min, mean, max)} & {\footnotesize (min, mean, max)} & {\footnotesize (min, mean, max)} & {\footnotesize (min, mean, max)} & {\footnotesize (min, mean, max)} \\
		\midrule
		\endhead
		
		\midrule \multicolumn{9}{r}{\footnotesize Continued on next page} \\
		\endfoot
		
		\bottomrule
		\endlastfoot
		
		I-BK1 &  (0.02, 0.07, 0.12) & (0.02, 0.09, 0.30) & (0.02, 0.09, 0.21) & (0.02, 0.08, 0.17) & (0.02, 0.09, 0.24)  & (0.02, 0.08, 0.17) & (0.02, 0.08, 0.13) & (0.02, 0.07, 0.15)\\
		IVU2  & (0.02, 0.56, 1.36) & (0.02, 5.07, 15.93) & (0.02, 5.27, 16.14) & (0.02, 3.03, 15.59) & (0.02, 8.80, 35.29)  & (0.02, 0.54, 1.37) & (0.02, 0.57, 1.48) & (0.02, 0.54, 1.39)\\
		I-CH &  (0.04, 0.13, 0.28) & (0.04, 0.61, 2.25) & (0.04, 0.30, 0.71) & (0.04, 0.25, 0.50) & (0.04, 0.47, 1.58)  & (0.04, 0.15, 0.23) & (0.04, 0.16, 0.26) & (0.04, 0.15, 0.21)\\
		I-FON &   (0.02, 0.23, 0.89) & (0.02, 2.46, 10.62) & (0.02, 1.00, 6.95) & (0.02, 1.07, 4.27) & (0.02, 2.80, 16.35)  & (0.02, 0.28, 1.15) & (0.02, 0.25, 1.19) & (0.02, 0.26, 1.16)\\
		I-KW2 &   (0.02, 0.39, 1.31) & (0.02, 1.87, 14.49) & (0.02, 3.25, 36.36) & (0.02, 1.93, 13.97) & (0.02, 3.21, 18.18)  & (0.02, 0.72, 3.23) & (0.02, 0.77, 5.04) & (0.02, 0.72, 2.99)\\
		I-Far1 &    (0.02, 0.10, 1.10) & (0.02, 0.43, 4.07) & (0.02, 0.50, 14.19) & (0.02, 0.74, 7.65) & (0.02, 0.49, 5.98)  & (0.02, 0.23, 3.48) & (0.02, 0.14, 1.27) & (0.02, 0.14, 4.80)\\
		I-Hil1 &    (0.02, 0.23, 2.15) & (0.02, 1.02, 7.32) & (0.02, 1.05, 10.43) & (0.02, 0.58, 3.50) & (0.02, 1.61, 14.81)  & (0.02, 0.17, 1.30) & (0.02, 0.23, 1.49) & (0.02, 0.23, 1.48)\\
		I-PNR &   (0.02, 0.14, 0.44) & (0.02, 0.97, 2.76) & (0.02, 0.53, 3.58) & (0.02, 0.40, 1.91) & (0.02, 1.00, 3.10)  & (0.02, 0.19, 0.52) & (0.02, 0.17, 0.37) & (0.02, 0.21, 0.54)\\
		I-Deb &    (0.02, 4.63, 413.67) & (0.02, 0.30, 1.86) & (0.02, 0.20, 1.69) & (0.02, 0.60, 4.95) & (0.02, 0.24, 2.01)  & (0.02, 0.48, 36.80) & (0.02, 1.49, 34.53) & (0.02, 0.71, 60.24)\\
		I-SD &    (0.02, 0.09, 0.17) & (0.02, 0.21, 0.82)
		& (0.02, 0.12, 0.36) & (0.02, 0.11, 0.25) & (0.02, 0.13, 0.38) & (0.02, 0.10, 0.17) & (0.02, 0.09, 0.17) & (0.02, 0.10, 0.18)\\
		I-IKK1 &    (0.02, 0.14, 6.86) & (0.02, 0.48, 18.73)
		& (0.02, 0.78, 26.26) & (0.02, 0.36, 10.39) & (0.02, 6.44, 193.81) & (0.02, 0.09, 2.25) & (0.02, 0.28, 3.86) & (0.02, 0.24, 9.15)\\
		 I-VFM1 &    (0.02, 0.05, 0.13) & (0.02, 0.14, 6.60)
		& (0.02, 0.07, 0.19) & (0.02, 0.05, 0.18) & (0.02, 0.07, 0.21) & (0.02, 0.06, 0.14) & (0.02, 0.05, 0.16) & (0.02, 0.05, 0.12)\\
		I-MHHM2 &    (0.02, 0.09, 0.16) & (0.04, 0.19, 0.49)
		& (0.04, 0.19, 0.52) & (0.02, 0.15, 0.28) & (0.02, 0.18, 0.43) & (0.04, 0.10, 0.18) & (0.04, 0.10, 0.19) & (0.02, 0.10, 0.19)\\
		I-Viennet &    (0.02, 0.06, 1.96) & (0.02, 0.09, 2.11)
		& (0.02, 0.05, 0.73) & (0.02, 0.06, 0.86) & (0.02, 0.12, 2.10) & (0.02, 0.05, 0.36) & (0.02, 0.05, 0.53) & (0.02, 0.04, 0.28)\\
		I-AP1 &    (0.02, 1.81, 30.15) & (0.02, 7.16, 70.86)
		& (0.02, 3.98, 36.89) & (0.04, 2.23, 21.73) & (0.02, 0.58, 4.71) & (0.02, 0.42, 3.38) & (0.02, 0.38, 3.26) & (0.02, 0.38, 3.08)\\
		I-MOP7 &    (0.48, 1.30, 2.43) & (0.06, 0.17, 0.31)
		& (0.06, 0.18, 0.30) & (0.06, 0.18, 0.31) & (0.06, 0.18, 0.31) & (0.06, 0.19, 0.28) & (0.09, 0.19, 0.26) & (0.11, 0.19, 0.27)\\
	I-VFM2 &    (0.02, 0.45, 2.19) & (0.02, 0.73, 53.82)
		& (0.02, 0.13, 1.77) & (0.02, 0.12, 1.20) & (0.02, 0.31, 6.95) & (0.02, 0.10, 0.57) & (0.02, 0.10, 0.81) & (0.02, 0.10, 0.93)\\
		I-TR1 &    (0.02, 0.20, 0.39) & (0.10, 0.10, 0.13)
		&(0.12, 0.13, 0.15) & (0.18, 0.22, 0.27) & (0.18, 0.23, 0.29) & (0.10, 0.12, 0.15) & (0.13, 0.15, 0.18) & (0.09, 0.10, 0.12)\\
		I-AP4 &    (0.02, 2.92, 55.34) & (0.02, 0.72, 6.06)
		& (0.02, 0.46, 4.22) & (0.02, 5.10, 120.28) & (0.02, 6.64, 147.83) & (0.02, 5.31, 78.80) & (0.04, 5.10, 49.97) & (0.02, 3.92, 49.43)\\
		I-Comet &    (0.02, 1.60, 2.94) & (0.02, 2.41, 5.85)
		& (0.02, 1.74, 7.79) & (0.02, 5.97, 12.12) & (0.02, 8.54, 18.11) & (0.02, 1.99, 3.81) & (0.02, 2.05, 3.09) & (0.02, 1.76, 3.13)\\
		
	\end{longtable}}  

\end{landscape}

Based on the results reported in Table \ref{performance table on iteration} and Table \ref{performance table on CPU}, the following conclusions can be drawn:
\begin{itemize}
	\item Within the PRP, HS, and LS variants of the NCGM, the PRP scheme performs most efficiently for the problems I-CH, I-Deb, I-Hil1, I-KW2, I-VU2, I-IKK1, and I-VFM2.
	
	\item The HS variant exhibits superior performance for the problems I-Far1, I-FON, I-PNR, I-SD, and I-MOP7 when compared to the other two variants.
	
	\item The LS variant shows better results for the problems I-BK1, I-AP1, I-AP4, I-Comet, I-MHHM2, and I-Viennet.
\end{itemize}

For each individual test problem, we adopt the variant (among PRP, HS, and LS) that yields the most favorable performance. For instance, in the case of problem I-Hil1, the PRP variant is employed.

\bigskip
\noindent
We randomly select five starting points for biobjective test problems given in \cite{mondal2025steepest} and plot their corresponding optimal objective values, $G(x^\star)$, within the feasible objective space in Figure \ref{figure:biobjective}. The gray shaded area represents the feasible objective space, while each rectangle filled with magenta color indicates the image of an optimal solution $G(x^\star)$. 

To depict the feasible objective space, we randomly generate $5000$ points from the domain of the decision variables for each problem. For every single point $x$, we obtain a rectangular image $G(x)$, and the union of all such images provides an approximation of the feasible objective space, namely,
\[
\text{objective feasible space} := \bigcup_{L \le x \le U} G(x).
\]

In the graphical representation, the small circle filled with black color denotes the center of $G(x^0)$, whereas the small circle filled with blue color indicates the center of $G(x^\star)$. These two centers are joined by line segment with magenta color.

\bigskip
\noindent
Visualizing the entire feasible objective space is not practical for triobjective test problems. Due to this reason, we depict the objective functions $G$ at a randomly generated initial point $x^0$ and at its converging point $x^\star$ for the triobjective test problems in Figure \ref{figure:triobjective}. In this figure, the cube filled with cyan color represents $G(x^0)$, while the cube filled with magenta indicates $G(x^\star)$. In addition, the curve with magenta color represents the trajectory of the centers of the sequence $\{G(x^k)\}$ generated by Algorithm \ref{Algorithm}.
	
	\begin{figure}[htbp]
		\begin{subfigure}[t]{0.33\textwidth}
			\includegraphics[width=\linewidth]{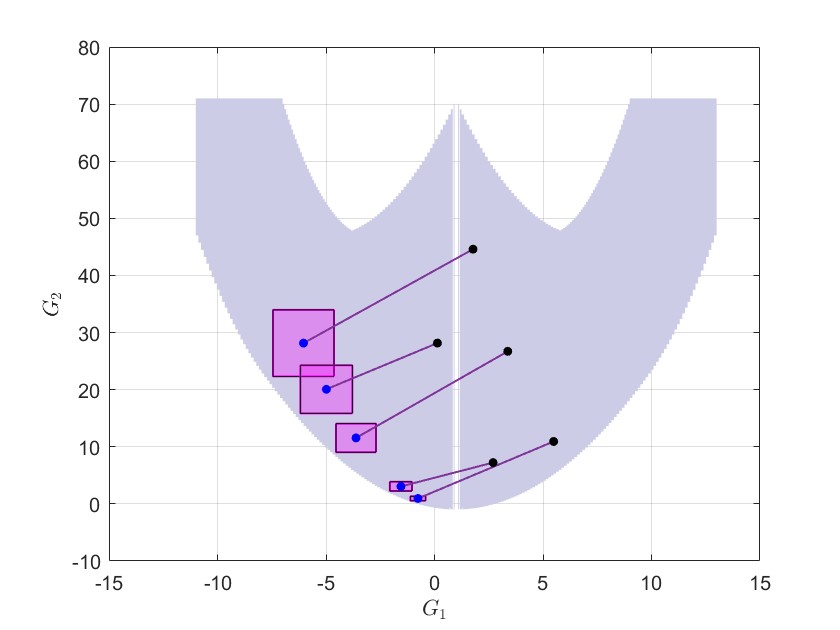}
			\caption{I-VU2}
			\label{fig:I-VU2}
		\end{subfigure}\hfill
		\begin{subfigure}[t]{0.33\textwidth}
			\includegraphics[width=\linewidth]{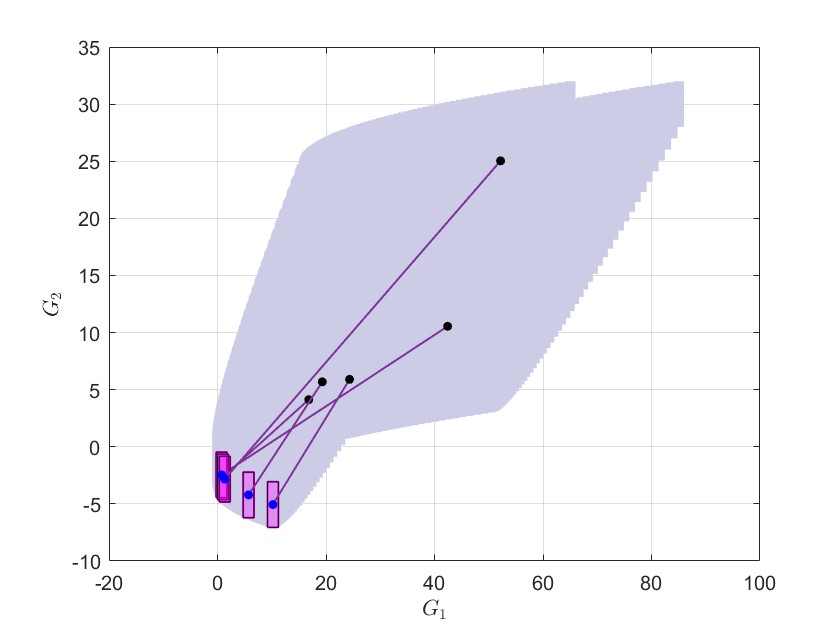}
			\caption{I-CH}
			\label{fig:I-CH}
		\end{subfigure}\hfill
		\begin{subfigure}[t]{0.33\textwidth}
			\includegraphics[width=\linewidth]{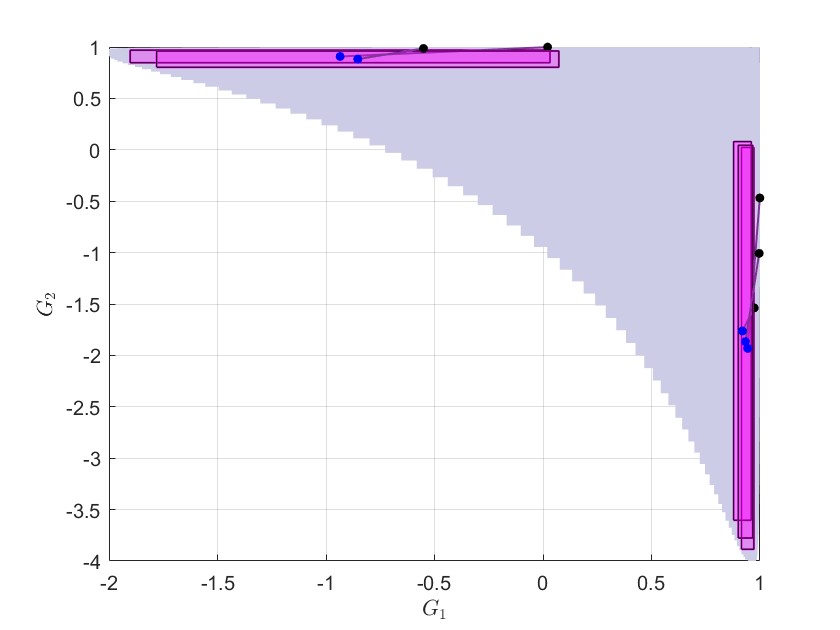}
			\caption{I-FON}
			\label{fig:I-FON}
		\end{subfigure}
		
		\begin{subfigure}[t]{0.33\textwidth}
			\includegraphics[width=\linewidth]{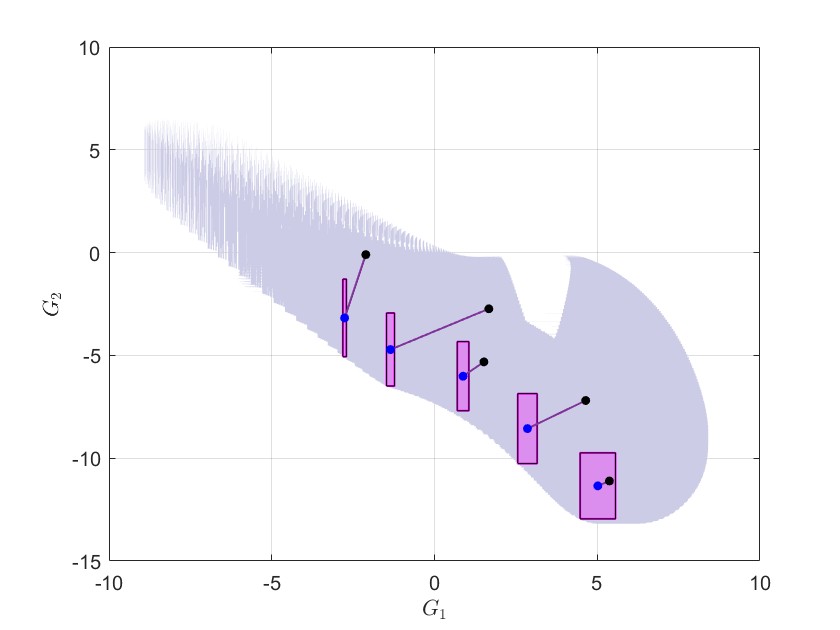}
			\caption{I-KW2}
			\label{fig:I-KW2}
		\end{subfigure}\hfill
		\begin{subfigure}[t]{0.33\textwidth}
			\includegraphics[width=\linewidth]{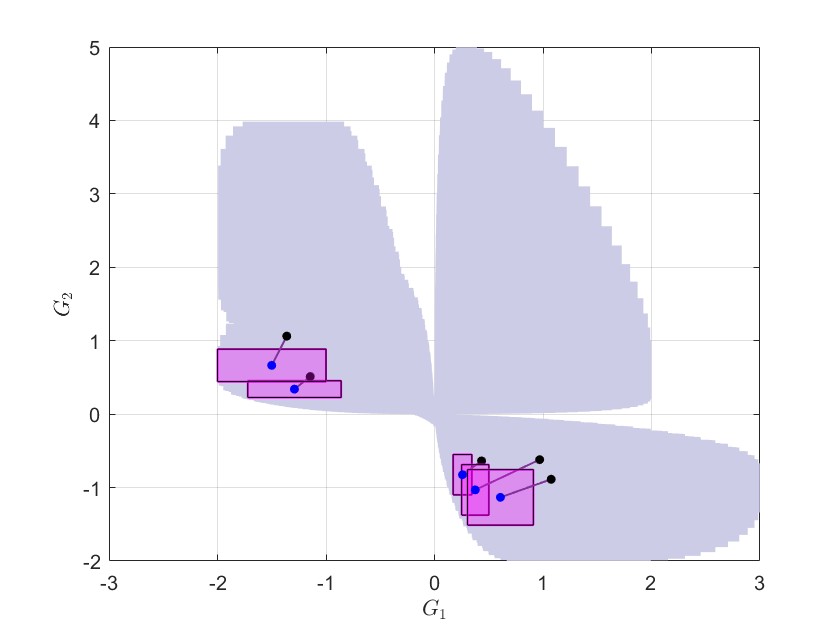}
			\caption{I-Far1}
			\label{fig:I-Far1}
		\end{subfigure}\hfill
		\begin{subfigure}[t]{0.33\textwidth}
			\includegraphics[width=\textwidth]{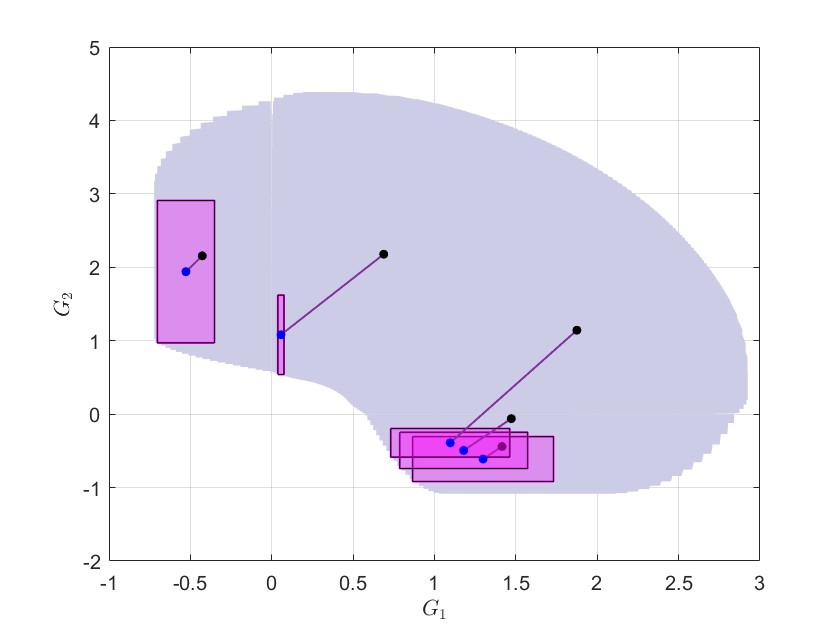}
			\caption{I-Hil1}
			\label{fig:I-Hil1}
		\end{subfigure}
		
		\begin{subfigure}[t]{0.33\textwidth}
			\includegraphics[width=\linewidth]{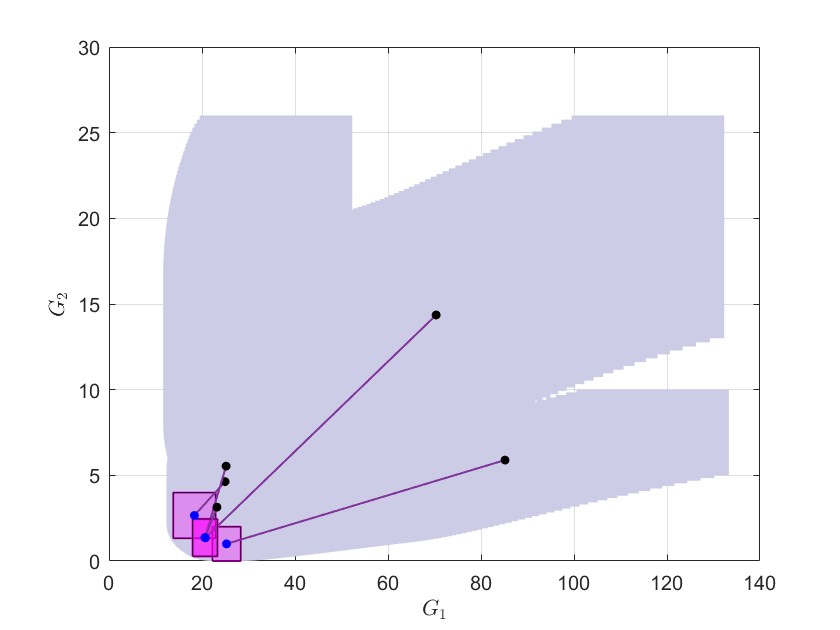}
			\caption{I-PNR}
			\label{fig:I-PNR}
		\end{subfigure}\hfill
		\begin{subfigure}[t]{0.33\textwidth}
			\includegraphics[width=\linewidth]{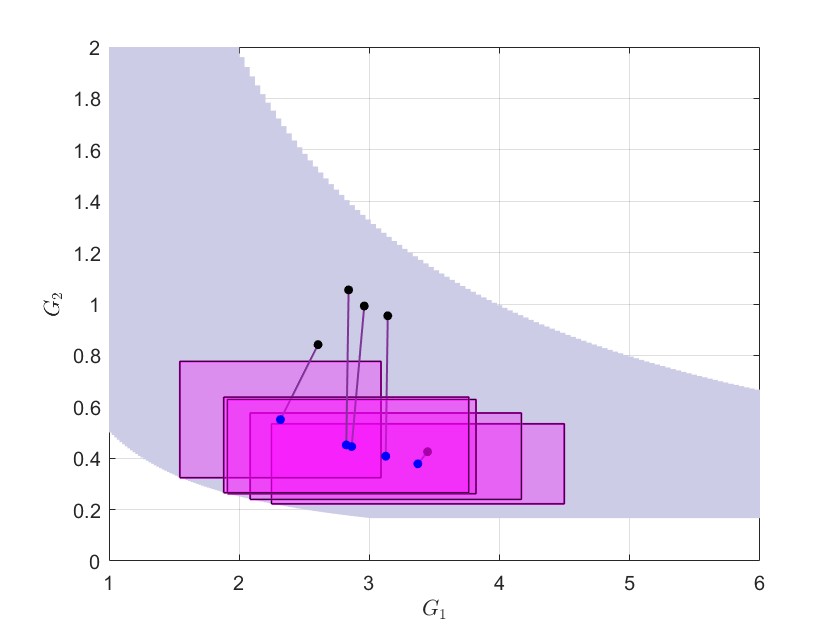}
			\caption{I-Deb}
			\label{fig:I-Deb}
		\end{subfigure}\hfill
		\begin{subfigure}[t]{0.33\textwidth}
			\includegraphics[width=\linewidth]{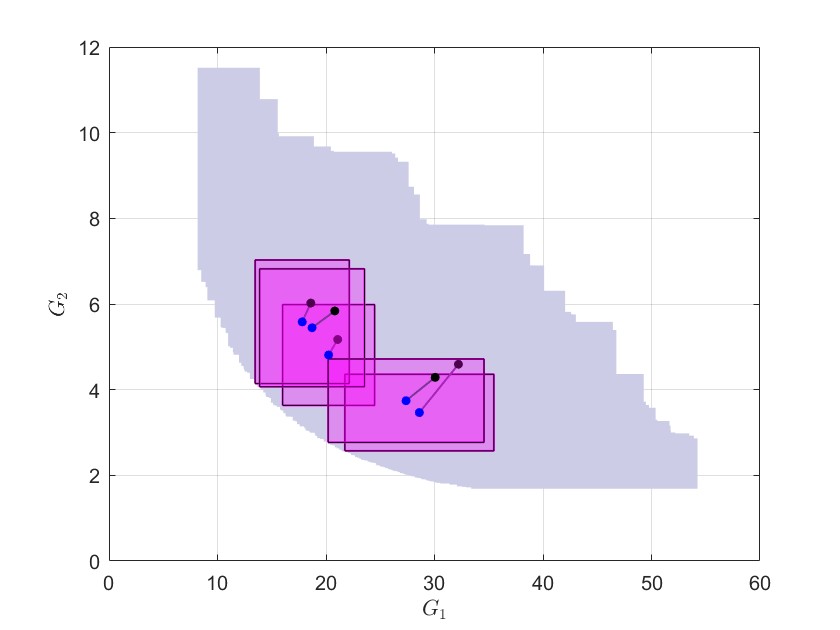}
			\caption{I-SD}
			\label{fig:I-SD}
		\end{subfigure}
		
		\caption{Output for biobjective test problems, initiated from five random starting points.}
		\label{figure:biobjective}
	\end{figure}
	
	\begin{figure}[htbp]
		\begin{subfigure}[t]{0.33\textwidth}
			\includegraphics[width=\linewidth]{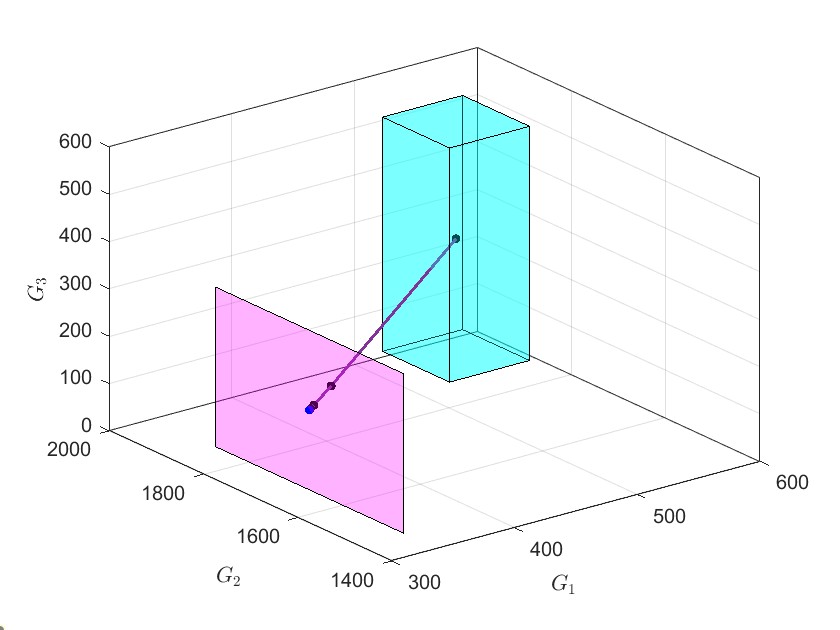}
			\caption{I-IKK1}
			\label{fig:I-IKK1}
		\end{subfigure}\hfill
		\begin{subfigure}[t]{0.33\textwidth}
			\includegraphics[width=\linewidth]{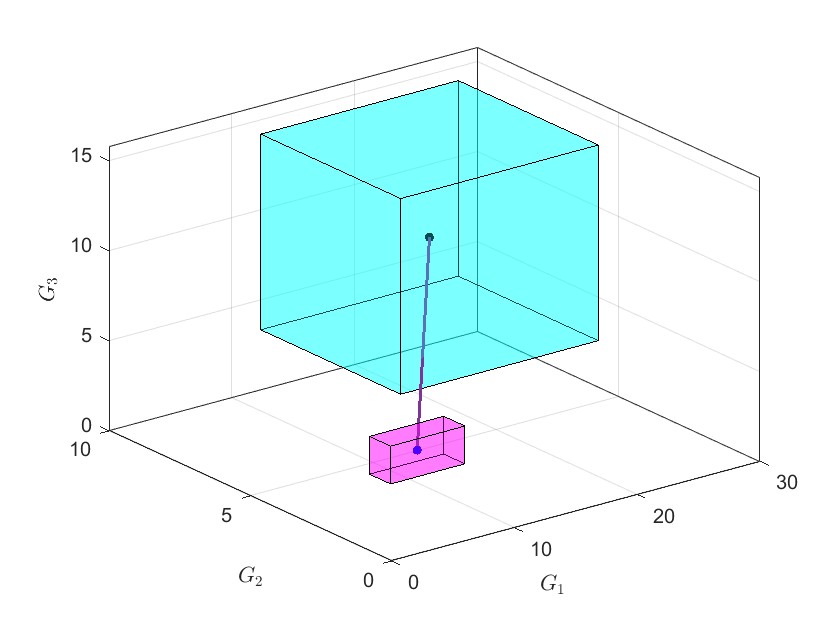}
			\caption{I-VFM1}
			\label{fig:I-VFM1}
		\end{subfigure}\hfill
		\begin{subfigure}[t]{0.33\textwidth}
			\includegraphics[width=\linewidth]{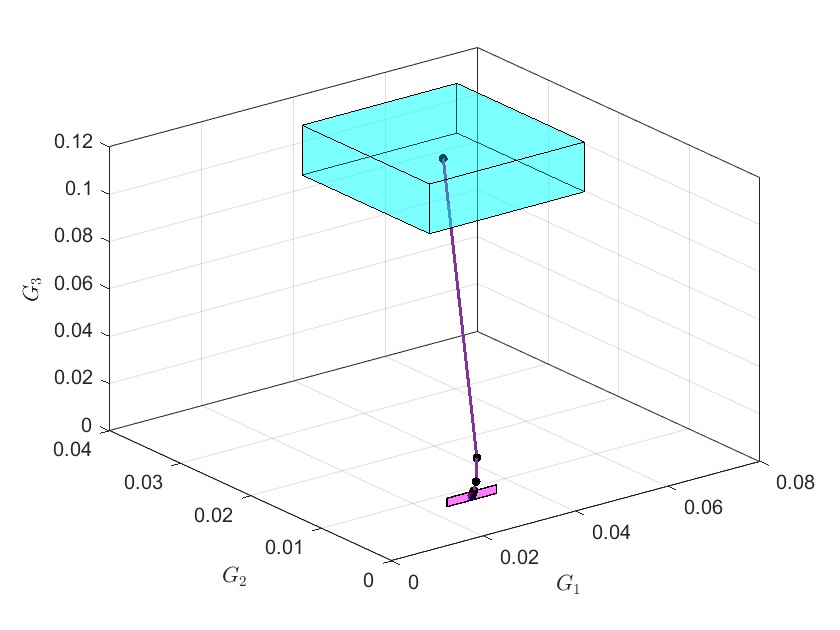}
			\caption{I-MHHM2}
			\label{fig:I-MHHM2}
		\end{subfigure}
		
		\begin{subfigure}[t]{0.33\textwidth}
			\includegraphics[width=\linewidth]{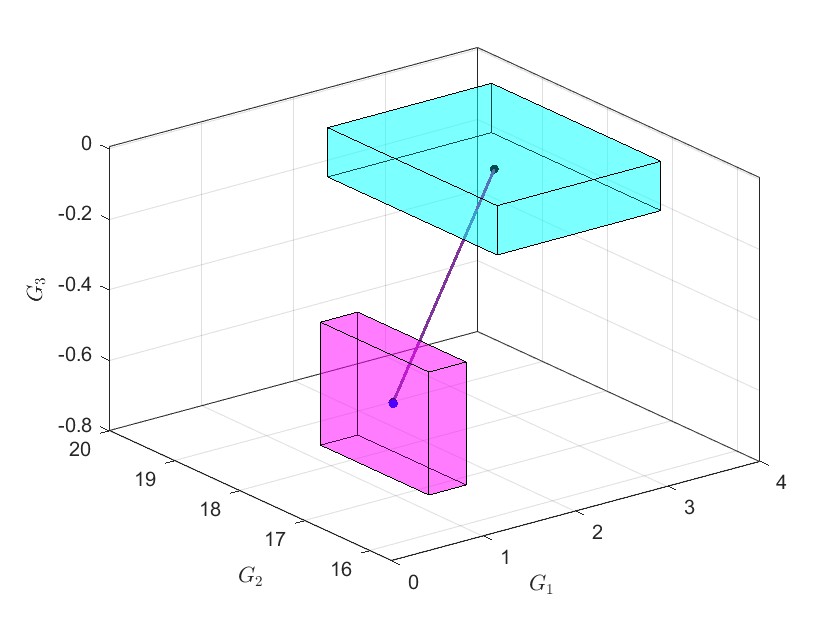}
			\caption{I-Viennet}
			\label{fig:I-Viennet}
		\end{subfigure}\hfill
		\begin{subfigure}[t]{0.33\textwidth}
			\includegraphics[width=\linewidth]{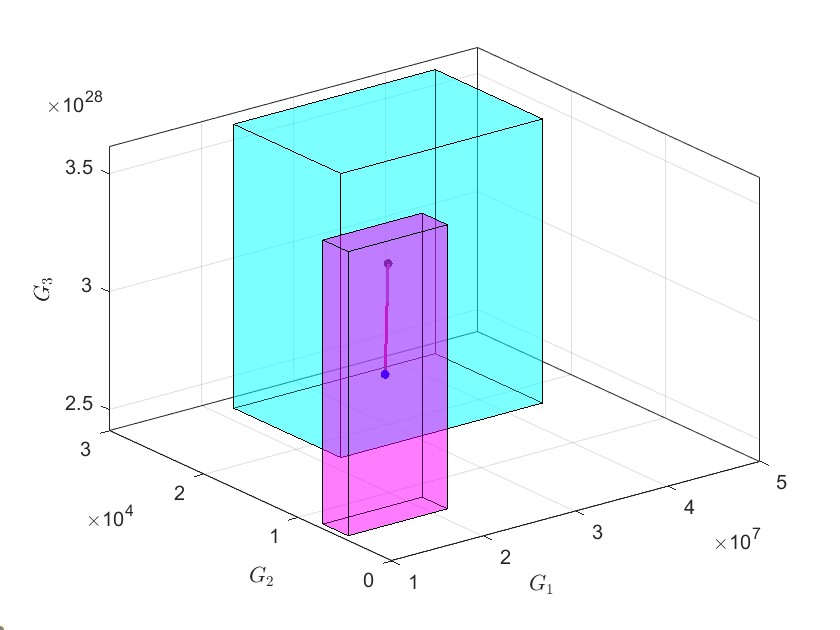}
			\caption{I-AP1}
			\label{fig:I-AP1}
		\end{subfigure}\hfill
		\begin{subfigure}[t]{0.33\textwidth}
			\includegraphics[width=\textwidth]{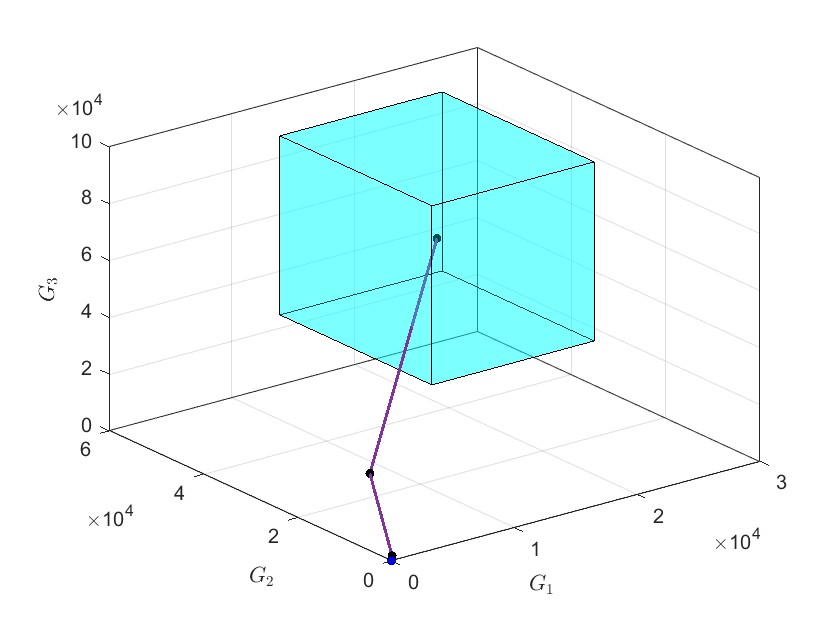}
			\caption{I-MOP7}
			\label{fig:I-MOP7}
		\end{subfigure}
		
		\begin{subfigure}[t]{0.33\textwidth}
			\includegraphics[width=\linewidth]{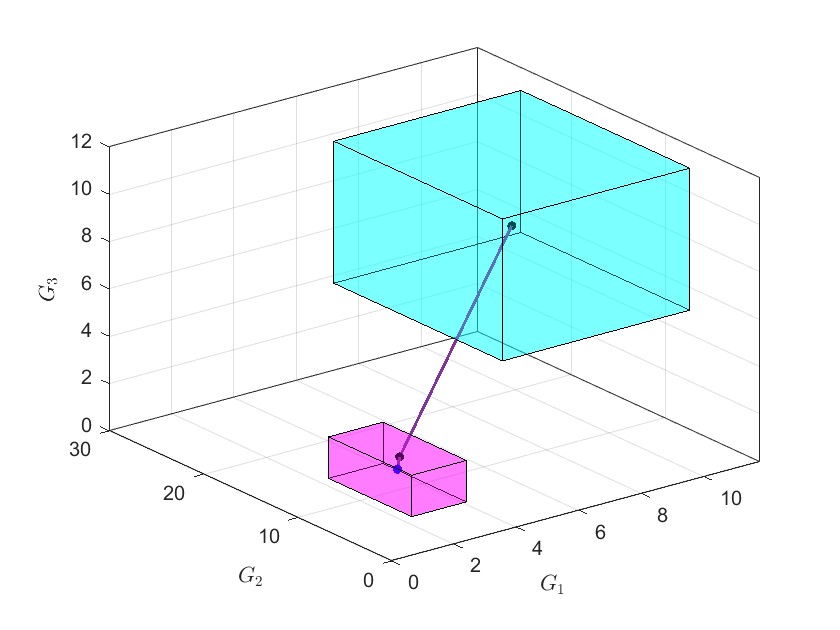}
			\caption{I-VFM2}
			\label{fig:I-VFM2}
		\end{subfigure}\hfill
		\begin{subfigure}[t]{0.33\textwidth}
			\includegraphics[width=\linewidth]{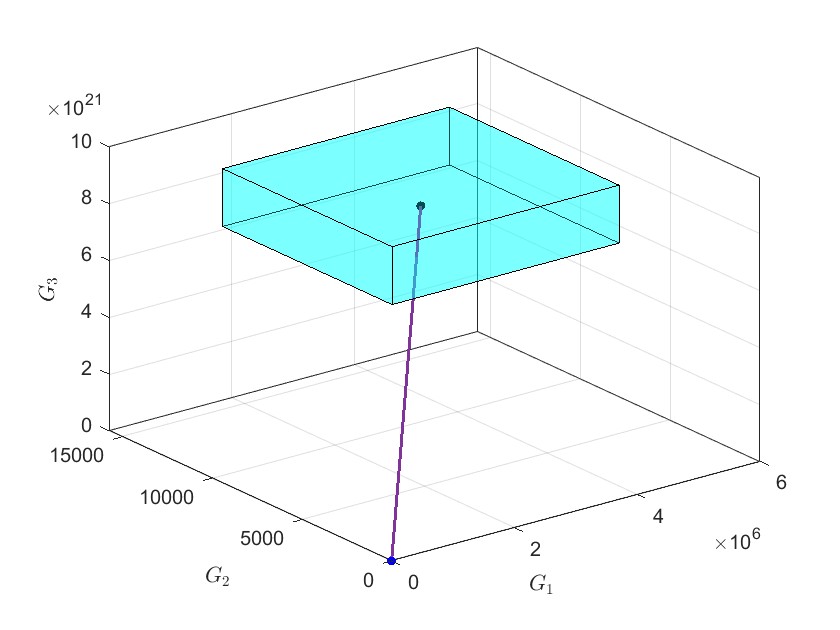}
			\caption{I-AP4}
			\label{fig:I-AP4}
		\end{subfigure}\hfill
		\begin{subfigure}[t]{0.33\textwidth}
			\includegraphics[width=\linewidth]{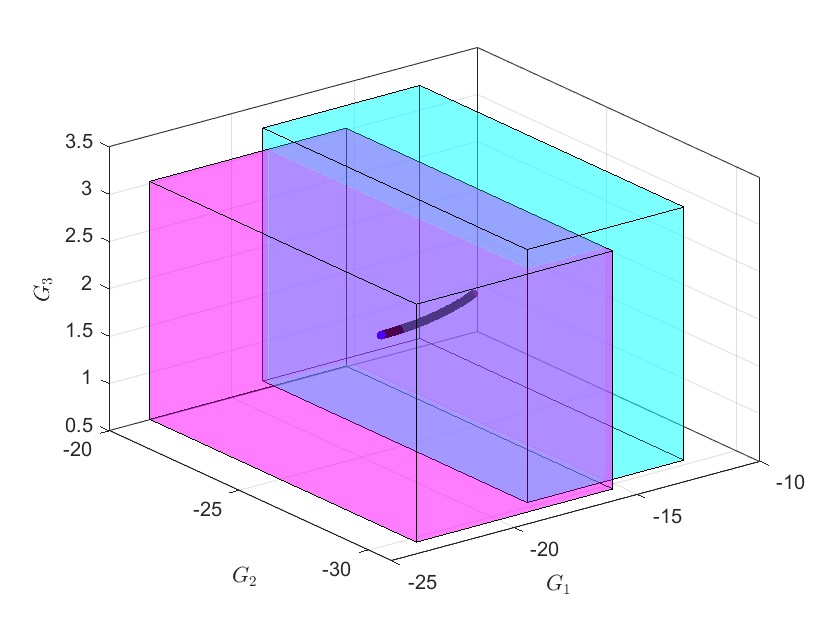}
			\caption{I-Comet}
			\label{fig:I-Comet}
		\end{subfigure}
		
		\caption{Initial and final objective values, $G(x^0)$ and $G(x^\star)$, for the triobjective test problems using a randomly initialized starting point.}
		\label{figure:triobjective}
	\end{figure}
\clearpage

\section{Conclusion and Future Directions}\label{Conclusion and Future Directions}
In this work, we have proposed three specialized variants of the NCGM, namely the PRP, HS, and LS schemes for solving unconstrained IVMOPs. First, we have provided the algorithm for the NCGM without any restriction on the algorithmic parameter (Algorithm \ref{Algorithm}). In Algorithm \ref{Algorithm}, we have used the standard Wolfe condition to compute an interval of step length. In addition, to derive the global convergence of our proposed Algorithm \ref{Algorithm}, we have assumed the sufficient decency condition. Under Assumptions \ref{assumption 1} and \ref{assumption 2}, we have established the global convergence of our proposed Algorithm \ref{Algorithm} when it satisfies Property \ref{property} with the sufficient decency condition \eqref{sufficient descent condition} and the standard Wolfe condition \eqref{Standard Wolfe condition in algorithm} (Theorem \ref{convergence theorem}). Further, we have established global convergence results for each of the proposed variants under appropriate assumptions, demonstrating the theoretical soundness of the approach (Theorem \ref{PRP,HS,LS convergence theorem}).

To assess the practical performance of the methods, we conducted a comprehensive numerical study using a collection of benchmark problems given in \cite{mondal2025steepest}. The proposed algorithms were compared with existing approaches, including the SD method \cite{mondal2025steepest} and other variants of the NCGM such as FR, CD, DY, and mDY \cite{mondal2026nonlinear}. The comparative analysis, carried out using the Dolan–Moré performance profile \cite{dolan2002benchmarking}, indicates that the PRP, HS, and LS variants exhibit improved efficiency in terms of convergence behavior and computational performance.

Despite these promising results, several avenues remain open for further investigation. One potential direction is the extension of the proposed methods to constrained IVMOPs, where feasibility conditions introduce additional complexity. Another important aspect is the development of adaptive or hybrid strategies that combine different conjugate gradient parameters to further enhance stability and convergence speed. Additionally, exploring second-order or quasi-Newton-type techniques within the IVMOP framework may provide further computational advantages. Under general strong convexity for IVFs, derivation of the Newton method is found difficult. Possibly, a dual reformulation \cite{peng2025optimality, peng2024optimality} of the IVF problems or $E$-convexity may help in this regard \cite{wen2025semi}.

Finally, applying the proposed algorithms to real-world problems involving uncertainty, such as engineering design, finance, and data-driven optimization, would be a valuable step toward demonstrating their practical applicability.

\bigskip

\noindent {\bf Acknowledgments} 
Tapas Mondal gratefully acknowledges the financial support provided by ANRF, India, under the Core Research Grant CRG/2022/001347 for carrying out this research. Zai-Yun Peng was partially supported by the National Natural Science Foundation of China (12271067) and the Chongqing Natural Science Foundation (CSTB2024NSCQ-MSX0973). Yong Zhao was supported by Natural Science Foundation 
of Chongqing (No. CSTB2025NSCQ-LZX0060) and the Science and Technology Research Program of Chongqing Education 
Commission  (No. KJQN202400760). \\

\noindent {\bf Data Availability}\\ 
No datasets were used in the preparation of this article. \\ 

\noindent {\bf Disclosure Statement} \\ 
The authors declare that there are no competing interests related to this work.


\begin{thebibliography}{plain}
	
\bibitem{assuncao2021conditional} Assun\c{c}\~{a}o,  P. B., Ferreira, O. P., Prudente, L. F.: Conditional gradient method for multiobjective optimization. Comput. Optim. Appl. {\bf 78}(3), 741--768 (2021).  
	
\bibitem{chauhan2021generalized} Chauhan, R. S., Ghosh, D., Ramík, J., Debnath, A. K.: Generalized Hukuhara-Clarke derivative of interval-valued functions and its properties. Soft Comput. {\bf 25}(23), 14629--14643 (2021).
 
 \bibitem{chen2025PRP} Chen, J., Bai, Y., Yu, G., Ou, X., Qin, X.: A PRP type conjugate gradient method without truncation for nonconvex vector optimization. J. Optim. Theory Appl. {\bf 204}, 13 (2025).
 	

\bibitem{debnath2022generalized} Debnath, A. K., Ghosh, D., Mesiar, R., Chauhan, R. S.: Generalized-Hukuhara subgradient and its application in optimization problem with interval-valued functions. S$\bar{\textnormal{a}}$dhan$\bar{\textnormal{a}}$ {\bf 47}(2), 1--16 (2022). 
	
\bibitem{dolan2002benchmarking} Dolan, E. D., Mor{\'e}, J. J.: Benchmarking optimization software with performance profiles. Math. Program., Ser. A {\bf 91}, 201--213 (2002).

\bibitem{drummond2004projected} Drummond, L. M. G., Iusem, A. N.: A projected gradient method for vector optimization problems.
Comput. Optim. Appl. {\bf 28}(1), 5--29 (2004). 


\bibitem{ehrgott2005multicriteria}  Ehrgott, M.: Multicriteria Optimization, Second Edition, Springer, Berlin Heidelberg New York, (2005). 


\bibitem{Elboulqe2024explicit} Elboulqe, Y., Maghri, M. E.: An explicit three-term Polak–Ribière–Polyak conjugate gradient method for bicriteria optimization. Oper. Res. Lett. {\bf 57}, 107195 (2024).
	
\bibitem{fliege2009newton} Fleige, J., Drummond, L. M. G., Svaiter, B. F.: Newton’s method for multiobjective optimization. SIAM J. Optim. {\bf 20}(2), 602--626 (2009). 

\bibitem{fliege2000steepest}  Fliege, J., Svaiter, B. F.: Steepest descent methods for multicriteria optimization. Math. Methods Oper.
Res. {\bf 51}(3), 479--494 (2000). 

\bibitem{ghosh2022generalized} Ghosh, D., Debnath, A. K., Chauhan, R. S., Castillo, O.: Generalized-Hukuhara-gradient efficient-direction method to solve optimization problems with interval-valued functions and its application in least-square problems. Int. J. Fuzzy Syst. {\bf 24}(3), 1275--1300 (2022). 

\bibitem{ghosh2017newton} Ghosh, D.: Newton method to obtain efficient solutions of the optimization problems with interval-valued objective functions. J. Appl. Math. Comput. {\bf 53}(1-2), 709--731 (2017).  

\bibitem{ghosh2017quasi} Ghosh, D.: A quasi-Newton method with rank-two update to solve interval optimization problems. Int. J. Comput. Appl. Math. {\bf 3}, 1719--1738 (2017).
	

\bibitem{ghosh2014directed} Ghosh, D., Chakraborty, D.: A direction based classical method to obtain complete Pareto set of multi-criteria optimization problems. Opsearch {\bf 52}(2), 340--366 (2015). 

\bibitem{Gilbert1992global} Gilbert, J. C., Nocedal, J.: Global convergence properties of conjugate gradient methods for optimization. SIAM J. Optim. {\bf 2}, 21--42 (1992).

\bibitem{gonclaves2022study} Gon\c{c}alves, M. L. N., Lima, F. S., Prudente, L. F.: A study of Liu-Storey conjugate gradient methods for vector optimization. Appl. Math. Comput. {\bf 425}, 127099 (2022). 



\bibitem{Hu2025modified} Hu, Q., Zhang, Y., Li, R., Zhu, Z.: A modified Polak-Ribière-Polyak type conjugate gradient method for vector optimization. Optim. Methods Softw. {\bf 40}(4), 725--754 (2025).

\bibitem{lapucci2023limited}Lapucci, M., Mansueto, P.:  A limited memory quasi-newton approach for multi-objective
optimization. Comput. Optim. Appl. {\bf 85}, 33--73 (2023).

\bibitem{miettinen1999nonlinear} Miettinen, K.: Nonlinear Multiobjective Optimization. Kluwer Academic Publishers, New York (1999).   

\bibitem{mohammadi2024trust} Mohammadi, A., Cust\'{o}dio, A. L.: A trust-region approach for computing Pareto fronts in multiobjective optimization. Comput. Optim. Appl. {\bf 87}(1), 149--179 (2024).    
	
\bibitem{mondal2025steepest} Mondal, T., Ghosh, D.: Steepest descent method for multiobjective optimization problems of interval-valued maps. Numer. Algorithms (2025). {\color{blue}https://doi.org/10.1007/s11075-025-02205-7}

\bibitem{mondal2026nonlinear} Mondal, T., Ghosh, D., Liu, J., Li, J.: Nonlinear conjugate gradient method for multiobjective optimization problems of interval-valued maps. Communicated. \\ 
{\color{blue}https://doi.org/10.48550/arXiv.2603.05814}


\bibitem{moore1966interval} Moore, R. E.: Interval Analysis. Prentice-Hall, Englewood Cliffs (1966). 






\bibitem{Peng2025novel} Peng, J., W., Zhong, D. H., Singh, A.: A novel modified Liu-Storey nonlinear conjugate gradient method for solving vector optimization problems. Optimization (2025). \\{\color{blue}https://doi.org/10.1080/02331934.2025.2516452}


\bibitem{peng2025optimality}
Peng, Z. Y., Peng, J. Y., Ghosh, D., Zhao, Y., Li, D.: Optimality conditions and duality results for generalized-Hukuhara subdifferentiable preinvex interval-valued vector optimization problems, Fuzzy Set Syst., {\bf 515}, 109416 (2025) 


\bibitem{peng2024optimality}
Peng, Z. Y., Deng, C. Y., Zhao, Y., Peng, J. Y.: 
Optimality conditions and duality for E-differentiable fractional multiobjective interval valued optimization problems with E-invexity, 
Set Valued Anal. Optim, 
{\bf 6}(3), 295--307 (2024). 





\bibitem{perez2018nonlinear} P{\'e}rez, L. R. L., Prudente, L. F.: Nonlinear conjugate gradient methods for vector optimization. SIAM J. Optim. {\bf 28}(3), 2690--2720 (2018).

\bibitem{polvaj2014quasi} Povalej, \v Z.: Quasi-Newton’s method for multiobjective optimization. J. Comput. Appl. Math. {\bf 255}, 765--777 (2014). 

\bibitem{stefanini2008generalization} Stefanini, L.: A generalization of Hukuhara difference. In: Dubois, D., Lubiano, M. A., Prade, H., Gil, M. Á., Grzegorzewski, P., Hryniewicz, O. (eds) Soft Methods for Handling Variability and Imprecision. Advances in Soft Computing, {\bf 48}, pp. 203--210, Springer, Berlin, Heidelberg, (2008).  






\bibitem{wen2025semi}
Wen, M., Peng Z. Y., Tan X. Y., Deng C. Y.: $E$-semi-preinvex interval-valued functions and interval-valued programming. Journal of Chongqing Normal University (Natural Science Edition), {\bf 42}(2),  117--126 (2025). 


\bibitem{wu2007karush} Wu, H. C.: The Karush–Kuhn–Tucker optimality conditions in an optimization problem with interval-valued objective function. Eur. J. Oper. Res. {\bf 176}(1), 46--59 (2007). 




\end{thebibliography}
\end{document}